\numberwithin{equation}{section} \setlength{\oddsidemargin}{.0001in}
\newcommand{\bean}{\begin{eqnarray*}}
\newcommand{\eean}{\end{eqnarray*}}
\newcommand{\be}{\begin{equation}}
\newcommand{\ee}{\end{equation}}
\newcommand{\bd}{\begin{displaymath}}
\newcommand{\ed}{\end{displaymath}}
\newcommand{\beq}{\begin{equation}}
\newcommand{\eeq}{\end{equation}}
\newcommand{\bea}{\begin{eqnarray}}
\newcommand{\eea}{\end{eqnarray}}
\newtheorem{lemma}{Lemma}[section]
\newtheorem{theorem}[lemma]{Theorem}
\newtheorem{definition}[lemma]{Definition}
\newtheorem{remark}[lemma]{Remark}
\newcommand{\norm}[1]{\left\Vert#1\right\Vert}
\newtheorem{proposition}[lemma]{Proposition}
\DeclareMathOperator{\tr}{tr}
\theoremstyle{definition}
\def\({\left(}
\def\){\right)}
\def\1{\mathds{1}}
\def\l|{\left|}
\def\r|{\right|}
\def\XXint#1#2#3{{\setbox0=\hbox{$#1{#2#3}{\int}$ }
\vcenter{\hbox{$#2#3$ }}\kern-.6\wd0}}
\begin{document}

\begin{abstract}
In image and audio signal classification, a major problem is to build stable representations that are invariant under rigid motions and, more generally, to small diffeomorphisms. Translation invariant representations of signals in $\mathbb{C}^n$ are of particular importance. The existence of such representations is intimately related to classical invariant theory, inverse problems in compressed sensing and deep learning. Despite an impressive body of litereature on the subject, most representations available are either: i) not stable due to the presence of high frequencies; ii) non discriminative; iii) non invariant when projected to finite dimensional subspaces. In the present paper, we construct low dimensional representations of signals in $\mathbb{C}^n$ that are invariant under finite unitary group actions, as a special case we establish the existence of low-dimensional and complete $\mathbb{Z}_m$-invariant representations for any $m\in\mathbb{N}$. Our construction yields a stable, discriminative transform with semi-explicit Lipschitz bounds on the dimension; this is particularly relevant for applications. Using some tools from Algebraic Geometry, we define a high dimensional homogeneous function that is injective. We then exploit the projective character of this embedding and see that the target space can be reduced significantly by using a generic linear transformation. Finally, we introduce the notion of {\it non-parallel} map, which is enjoyed by our function and employ this to construct a Lipschitz modification of it.

\smallskip
\noindent \textbf{Keywords.} invariant theory, signal classification, stable representation

\end{abstract}

\title{Complete set of translation invariant measurements with Lipschitz bounds}
\date{}
\author{Jameson Cahill \and Andres Contreras \and Andres Contreras Hip}

\address[J.Cahill]{Department of Mathematical Sciences, New Mexico State University, Las Cruces,
New Mexico, USA}
\email{jamesonc@nmsu.edu}

\address[A.Contreras]{Department of Mathematical Sciences, New Mexico State University, Las Cruces,
New Mexico, USA}
\email{acontre@nmsu.edu}

\address[A.Contreras Hip]{Department of Mathematical Sciences, New Mexico State University, Las Cruces,
New Mexico, USA}
\email{albertch@nmsu.edu}

\maketitle

\section{Introduction}

One of the most important problems in machine learning and signal processing is the classification of visual and audio signals, i.e., given an equivalence relation $\sim$ on $\mathbb{C}^n$ one would like to find a map $\Phi:\mathbb{C}^n\rightarrow\mathbb{C}^N$ with the property that $\Phi(x)=\Phi(y)$ if (and ideally only if) $x\sim y$. One would also like $N$ to be as small as possible and for $\Phi$ to be relatively easy to compute. It is a nontrivial task and the study of this an other related problems has sparked a wealth of exciting developments in mathematics in recent years \cite{mallat, afonso, bruna2013invariant, bronstein2017geometric, gu2018} see also \cite{W96} and references therein for more on the history of these types of problems. The sophisticated tools created and used for this purpose, bridge several, seemingly unrelated areas of mathematics.

One type of equivalence relation that comes up in many instances is when the equivalence classes are given by the orbits of some group action, that is, a group $G$ acting on $\mathbb{C}^n$ and $x\sim y$ whenever there is a $g\in G$ for which $x=gy$. In all the applications considered in our study, we will assume that our action comes from a unitary representation of $G$, i.e., we have a group homomorphism 

\begin{equation}\label{assumption unitary}\sigma:G\rightarrow U(n) \mbox{ and for } g\in G \mbox{ and }x\in\mathbb{C}^n, gx=\sigma(g)x .\end{equation}
 In this case we write $\mathbb{C}^n/G=\mathbb{C}^n/\sim$ for the space of orbits (this is a slight abuse of notation as this space depends on the specific representation of $G$) and the quotient metric on $\mathbb{C}^n/G$ is given by (see Lemma 3.3.6 in \cite{metric})
\begin{equation}\label{qmetric}
d_G([x],[y])=\inf_{g\in G}\|x-gy\|
\end{equation}
where $[x]$ denotes the orbit of $x$ under the action of $G$.

By some elementary considerations, using small separating sets, one can show the existence of an injective map \[ f :\mathbb{C}^n/ G \to \mathbb{C}^{2n + 1} \] that distinguishes orbits. This is essentially a restatement of Proposition 5.1.1 in \cite{emilie}, see Section \ref{alg}.
While this looks very good in principle because it solves the classification problem with a small number of measurements, it is not good enough for applications. In general one would like to obtain maps that are not only injective but also stable in the sense that the distortion of the representation is globally controlled. In addition, one would like to construct measurements that are robust enough to preserve injectivity under small perturbations, due to noise for example. To be more precise, one would want a bi-Lipschitz map $f$, that is a map for which there are constants $0<c\leq C<\infty$ so that
\begin{equation}\label{lulips}
cd_G([x],[y])\leq \|f([x])-f([y])\|\leq Cd_G([x],[y])
\end{equation}
for every $x,y\in\mathbb{C}^n .$

 In this work we study the classification problem under finite unitary actions of signals in $\mathbb{C}^n.$ We obtain a representation in low dimensions, the first of its kind in a nontrivial setting. Its simplicity makes it a good option for applications. We first obtain some general results that apply to arbitrary unitary actions of finite groups before specializing to the important case of discrete translations (cyclic groups) where our results can be made much more explicit. 

One of the motivations of our study is the problem of finding a representation of functions in $L^2(\mathbb{R}^n)$ that is invariant under translations. One such representation is given by the modulus of the Fourier transform; however, it is not injective and furthermore it is known to yield instabilities in the presence of high frequencies and it is for this reason not suitable for implementation. The numerous works that have tackled different aspects of this problem in recent years rely mostly on statistical learning techniques and as such they inherit some of their problems and limitiations.To develop a more satisfactory approach to classification of translation invariant signals in $L^2(\mathbb{R}^n)$, Mallat introduced \cite{mallat} a \textit{scattering transform} that is invariant under translations but more importantly, comes with a global upper-Lipschitz bound (the scattering transform is non-expansive). To deal with the problem of instabilities created by the presence of high frequencies, Mallat tames the contributions of fine scale oscillations by using a wavelet-based convolutional network; a limiting procedure gives the desired object at infinite depth. Although the scattering transform provides a non-expansive map, invariant under general small diffeomorphisms, it is not discriminative and is not actually invariant in finite dimensions (invariance is only achieved in the limit, while for implementation one needs to cut off the process after a few iterations). In applications one always works in large, but finite dimensions and thus an alternative to the scattering transform is desirable, one that can be readily applied in this setting with few measurements and with a guarantee of no misclassification. The goal of our work is to provide a different approach to the group invariant classification problem in the relevant finite dimensional setting.

Of particular interest is the case where $G=\mathbb{Z}_m$ is a cyclic group. The representation of $G$ in this case will simply be the powers of some matrix $T$ that satisfies $T^m=I$. Since the minimal polynomial of $T$ divides $x^m-1$ it follows that $T$ is diagonalizable and all of its eigenvalues are $m$th roots of unity. To better understand complete translation invariant measurements in this setting, we further specialize to $G=\mathbb{Z}_n$ where the action is given by

\be \label{Znaction}T^kx(j)=x(j-k \text{ mod }n)\mbox{ for }k=1,...,n\ee 

It is well known $T$ is diagonalized by the discrete Fourier transform $\mathcal{F}x=\hat{x}$, and so in the Fourier domain we have
\[
\widehat{Tx}(j)=e^{2\pi i j/n} \hat{x}(j)
\]
Theorem \ref{main} guarantees the existence of a Lipschitz map that distinguishes orbits under this finite translation action with a number of measurements that is linear on the dimension.

Another important application that is closely related to the problem of phase retrieval is the case where $T=\omega I$ where $\omega$ is an $m$th root of unity, see Section \ref{hg}. 

A consequence of our study gives a stable and discriminative nonlinear transform that is $\mathbb{Z}_m$-invariant. More precisely, we prove the following:

\begin{theorem}\label{main} There is a $\mathbb{Z}_m$-invariant map $\Phi:\mathbb{C}^n \mapsto \mathbb{C}^{2n+1}$ that induces an injective map $\tilde{\Phi} : \mathbb{C}^n /\mathbb{Z}_m \mapsto \mathbb{C}^{2n + 1} ,$and a constant $C>0$ depending only on $m$ such that

\begin{equation} \| \Phi (x)- \Phi (y) \|\leq C d_{\mathbb{Z}_m}([x],[y]) ,
\end{equation}
 for every $ x, y \in \mathbb{C}^n .$
\end{theorem}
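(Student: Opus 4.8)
The plan is to obtain $\Phi$ as a norm-normalization of a homogeneous invariant embedding, in three stages mirroring the abstract. First I would use invariant theory for the finite group $\mathbb{Z}_m$ to produce a map $P\colon\mathbb{C}^n\to\mathbb{C}^K$ whose coordinates are $\mathbb{Z}_m$-invariant monomials, all of one common degree $D=O(m)$, chosen so that $P$ separates orbits, i.e.\ descends to an injective map $\mathbb{C}^n/\mathbb{Z}_m\to\mathbb{C}^K$. Since for a finite group the invariant polynomials separate orbits over $\mathbb{C}$, the only real issue here is to homogenize to a single degree without losing separation; for the scalar action $T=\omega I$ this is immediate, as every degree-$m$ monomial is already invariant and the degree-$m$ Veronese map separates $\mathbb{Z}_m$-orbits. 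I would weight the monomials by the multinomial coefficients so that $\langle P(u),P(v)\rangle=\langle u,v\rangle^{D}$, and in particular $\|P(x)\|=\|x\|^{D}$; this near-isometry is what later makes the Lipschitz constant depend on $m$ alone.

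Second, I would reduce the dimension of the target from $K$ to $2n+1$. Here I would exploit the projective character of $P$: being homogeneous of degree $D$ it factors through $\mathbb{P}^{n-1}$ and its image is a complex cone of dimension at most $n$. A dimension count for the secant variety of this cone (of dimension at most $2n+1$) shows that a generic linear map $L\colon\mathbb{C}^K\to\mathbb{C}^{2n+1}$ is injective on the image, so $Q:=L\circ P$ still separates orbits; I would moreover take $\|L\|\le 1$ so that this reduction cannot inflate the eventual Lipschitz constant. The outcome is a homogeneous, orbit-separating $Q\colon\mathbb{C}^n\to\mathbb{C}^{2n+1}$ of degree $D$.

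Third, I would set $\Phi(x)=\|x\|^{1-D}Q(x)$ for $x\ne 0$ and $\Phi(0)=0$, which is homogeneous of degree one and $\mathbb{Z}_m$-invariant because the norm is invariant. Invariance reduces the upper bound to global Lipschitz continuity of $\Phi$: since $\Phi(y)=\Phi(gy)$, one has $\|\Phi(x)-\Phi(y)\|=\|\Phi(x)-\Phi(gy)\|\le \mathrm{Lip}(\Phi)\,\|x-gy\|$ for every $g\in\mathbb{Z}_m$, and taking the infimum over $g$ gives $C\,d_{\mathbb{Z}_m}([x],[y])$. To bound $\mathrm{Lip}(\Phi)$ I would use that a degree-one homogeneous map whose restriction to the unit sphere is Lipschitz is globally Lipschitz, with constant controlled by the sphere data. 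On the sphere $\Phi=Q$, and from $\|Q(u)-Q(v)\|\le\|P(u)-P(v)\|$ together with $\|P(u)-P(v)\|^{2}=2-2\,\mathrm{Re}\,\langle u,v\rangle^{D}\le D^{2}\|u-v\|^{2}$ one gets a Lipschitz constant of order $D=O(m)$, independent of $n$. This is what yields $C=C(m)$.

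The step I expect to be the crux is showing that this normalization does not merge distinct orbits, since dividing by $\|x\|^{D-1}$ entangles the norm with the angular information. This is exactly where the non-parallel property enters. Writing $x=r\hat x$ and $y=s\hat y$ with unit vectors, homogeneity gives $\Phi(x)=rQ(\hat x)$ and $\Phi(y)=sQ(\hat y)$, so $\Phi(x)=\Phi(y)$ forces $Q(\hat x)$ and $Q(\hat y)$ to be positive scalar multiples of each other. I would show $Q$ is non-parallel, meaning distinct orbits on the sphere map to non-parallel vectors: if $Q(\hat x)=cQ(\hat y)$ with $c>0$ then $Q(c^{1/D}\hat y)=Q(\hat x)$, orbit-separation yields $\hat x=g\,c^{1/D}\hat y$ for some $g$, and comparing norms forces $c=1$ and $\hat x=g\hat y$; hence $r=s$ and $[x]=[y]$, so $\tilde\Phi$ is injective. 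The remaining care is twofold: guaranteeing that the generic projection $L$ preserves orbit-separation (and thus non-parallelism) while keeping $\|L\|$ under control, and carrying out the stage-one homogenization for general cyclic weights, where a single fixed degree of invariant monomials need not separate orbits and one must enlarge the degree or the monomial set---the scalar case remaining the clean prototype.
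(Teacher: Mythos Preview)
Your three-stage architecture---separating invariant polynomials, generic linear projection to $\mathbb{C}^{2n+1}$, then the norm-normalization $\Phi(x)=\|x\|\,H(x/\|x\|)$ together with the non-parallel property to retain injectivity---is exactly the skeleton the paper uses. The gap is in stage one, and it propagates through the other two.

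You build everything on a separating invariant map $P$ that is homogeneous of a \emph{single} degree $D$. For the scalar action $T=\omega I$ this is fine: every degree-$m$ monomial is invariant, the full Veronese map works, and your identity $\langle P(u),P(v)\rangle=\langle u,v\rangle^{D}$ holds. But Theorem~\ref{main} concerns an arbitrary unitary $\mathbb{Z}_m$-action, i.e.\ $T=\mathrm{diag}(t_1,\dots,t_n)$ with each $t_i$ an $m$th root of unity. In that generality the invariant monomials of a fixed degree need not separate orbits (for instance, with $T=\mathrm{diag}(i,-1)$ on $\mathbb{C}^2$ the only degree-$4$ invariants are $x_1^4$ and $x_2^4$, which do not distinguish $(1,1)$ from $(1,-1)$), and if you keep only the invariant monomials you lose the Veronese inner-product identity anyway. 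You flag this at the end, but both of your later arguments lean on homogeneity in an essential way: your non-parallel proof uses $Q(c^{1/D}\hat y)=cQ(\hat y)$, and your Lipschitz bound uses $\|P(u)-P(v)\|^2=2-2\,\mathrm{Re}\,\langle u,v\rangle^{D}$. Neither survives once $P$ is replaced by a mixed-degree separating set.

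The paper handles the general case by abandoning single-degree homogeneity. It takes the mixed-degree map $F_T(x)=\bigl(x_i^{m_i},\,x_j^{a_{jk}}x_k^{b_{jk}}\bigr)$, proves the non-parallel property for $F_T$ directly by a more delicate argument (constructing an auxiliary $\tilde y$ with $F_T(\tilde y)=\lambda F_T(y)$ coordinatewise and then forcing $\lambda=1$ from $\|x\|=\|y\|=1$), and obtains the Lipschitz constant from a crude gradient estimate on the sphere, $\|\nabla_{\mathbb{S}}F_T\|_\infty\le m$, rather than from any inner-product identity. The homogenization you want appears only \emph{inside} the dimension-reduction step, where an auxiliary variable $t$ is introduced to make the difference map projective; it is not imposed on the separating map itself. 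So your plan is correct and essentially complete for the scalar case (and, as the paper notes in its homogeneous subsection, one can even hit $\mathbb{C}^{2n}$ there), but for the general statement you would need to replace both the Veronese Lipschitz estimate and the $c^{1/D}$ non-parallel trick with arguments that do not presuppose homogeneity.
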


We provide more analytic and geometric information about the map $\Phi$ in Section \ref{proofofmain}, in particular we give explicit bounds on the optimal Lipschitz constant $C.$

\vskip.2in
\noindent{\bf Constructing complete invariant representations with few measurements}
\vskip.1in

As mentioned, in this paper we follow a different approach to build translation invariant representations. Our perspective hinges upon the use of polynomial measurements (as oposed to an infinite chain of convolution-taking modulus operations). The alternative we propose here has the following advantages

\begin{itemize}
\item it lends itself well for applications; this is because the measurement and coding of signals is usually treated in very high but finite dimensional spaces. The calculations involved in the transform are very easy to handle and do not require complex mathematical or computational operations.
\item the number of measurements needed to differentiate signals in different orbits is linear in the number of dimensions.
\item the map we construct is actually $G$-invariant, as opposed to approximately invariant. As such, it is an accurate representation at the chosen level of precision(scale).
\item the representation really solves the classification problem, it is truly injective.
\item not only is the map produced stable, but it comes with almost explicit bounds. 
\end{itemize}
Our representation is not entirely constructive since the argument relies on choosing a linear map in the dimension reduction part and this is done using an abstract result. This in itself does not limit the range of applicability of the transform which works for a generic choice. 

The construction of an initial large set of polynomial measurements that allow us to separate orbits follows from simple and intuitive observations. However, this is just the starting point as a map consisting of such monomials does not satisfy any of the other properties we ask of our representations.

To better illustrate the ideas and challenges behind the proof of Theorem \ref{main}, let us restrict ourselves to the case where $G$ is $\mathbb{Z}_p,$ where $p$ is a prime number and the action is given by translation as in \eqref{Znaction}. We let $M$ be the modulation operator which is the diagonalization of the translation operator, that is, $M = \text{diag } {\left( e^{2 \pi i j / p} \right)}_{j = 0, \hdots , p-1}$. In the Fourier domain we can define 
\[
 F(\widehat{x}) := (\widehat{x}_0, {\widehat{x}_1}^p, {\widehat{x}_2}^p,\hdots, {\widehat{x}_{p-1}}^p, {\widehat{x}_1}^{p-2} \widehat{x}_2, \hdots, {\widehat{x}_1}^{p-k} \widehat{x}_k, \hdots, \widehat{x}_1 \widehat{x}_{p-1}).
\]
One can see that $F(\widehat{x})=F(\widehat{y})$ if and only if $\widehat{x}=M^k  \widehat{y},$ for some $k\in \mathbb{N}.$ Indeed, suppose first that $F(\widehat{x}) = F(\widehat{y}),$ where $\widehat{x} = (\widehat{x}_0, \hdots, \widehat{x}_{p-1})$ and $\widehat{y} = (\widehat{y}_0, \hdots, \widehat{y}_{p-1}).$ Then $\widehat{x}_0 = \widehat{y}_0$ and there are $n_k, k = 2, \hdots, p,$ such that $\widehat{x}_k = {\zeta}^{n_k} \widehat{y}_k \mbox{ for } k = 2, \hdots, p,$ where $\zeta = e^{2 \pi i /p}.$ On the other hand, we have 
\[
{(\zeta ^{n_1})}^{p-k} \zeta ^{n_k} = 1, \mbox{ and so } {\zeta}^{n_k} = ({\zeta}^{n_1})^{k}.
\]
But then, $\widehat{x} = M^{n_1} \widehat{y} .$  

However, one can readily see that this map does not separate orbits, because if $x_1 = 0,$ then $x_2, \hdots, x_{p-1}$ are completely free. Also, since the measurements are polynomials, we cannot expect to have global Lipschitz bounds. To solve the separating problem, we can add a monomial for each pair of variables, but then we have $\mathcal{O} (p^2)$ measurements. We will see in Section \ref{abstractresults} that by taking generic linear combinations of these measurements we can reduce the dimension dramatically. Then the problem becomes how to turn the resulting map into a Lipschitz one without losing the discriminative property and without adding more measurements. It is at this point that a geometric property we introduce (see definition \ref{non-parallel prop.}) becomes crucial for building a stable transform. We combine this property satisfied by our polynomials together with the fact that our actions are unitary to reduce all our measurements to the unit sphere.

Although the maps we construct are injective, we show in Section \ref{absenceofllb} that their inverses are almost never Lipschitz. This of course does not rule out the possibility that other representations may come with a lower Lipschitz bound. However we believe such representations must be non-algebraic and therefore essentially different from ours.

The paper is organized as follows. In the second section we introduce some necessary algebraic background and we also discuss the connection with the well-known problem of phase retrieval. In Section \ref{abstractresults} we prove some general results concerning the construction of effective low dimensional discriminative measurements under natural assumptions. In Section \ref{proofofmain} we verify that the examples of interest for us satisfy the conditions of the theorems in Section \ref{abstractresults} and we obtain explicit Lipschitz bounds for the resulting transforms. We finish the paper with a discussion on open questions and a few remarks in Section \ref{remarks}.

\section{Background}
\subsection{Algebraic invariants}\label{alg}

Given an action of a group $G$ on $\mathbb{C}^n$ we denote by $\mathbb{C}[x]^G$ the ring of polynomials in $\mathbb{C}[x]=\mathbb{C}[x_1,...,x_n]$ that are invariant under this action. It is well known that this ring is finitely generated as a $\mathbb{C}$-algebra, however the generating set could be arbitrarily large. Nonetheless one might hope that given a generating set $\{p_i\}_{i=1}^N$ then the map $P:\mathbb{C}^n\rightarrow\mathbb{C}^N$ given by
\begin{equation}\label{algmap}
P(x)=(p_i(x))_{i=1}^N
\end{equation}
would induce a map $\tilde{P}$ that is injective on $\mathbb{C}^n/G$. Unfortunately this turns out not to be true in general. In fact, an example for this is $\mathbb{C}^* ,$ the multiplicative group of $\mathbb{C}$ acting on $\mathbb{C}^2$ via scalar multiplication \cite{DerksenKemper}, example 2.3.1.

As in \cite{finitegroups}, we define
\[
\nu_{sep} = \{ (x, y) | \mbox{ for all } P \in {\mathbb{C} [x]}^G, \mbox{ we have } P(x) = P(y) \}.
\]
Then a set $S \subset {\mathbb{C} [x]}^G$ is said to be \textit{separating} if whenever $P(x) = P(y)$ for all $P \in S,$ we have that $(x, y) \in \nu_{sep}$. It is known that separating sets can exist which are much smaller than the number of generators of $\mathbb{C}[x]^G$, in fact in \cite{emilie} the following is proved in Theorem 5.1.1:

\begin{theorem}
If $G$ acts on $\mathbb{C}^n$ then a separating set of size at most $2n+1$ exists.
\end{theorem}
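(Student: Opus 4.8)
The plan is to prove the existence of a separating set of size at most $2n+1$ by a dimension-counting argument over the variety $\nu_{sep} \subset \mathbb{C}^n \times \mathbb{C}^n$, following the strategy of Domokos and others in the finite-group separating-set literature. The key observation is that while $\mathbb{C}[x]^G$ may require many generators, the relation $\nu_{sep}$ is cut out by the vanishing of differences $P(x) - P(y)$ for $P$ ranging over \emph{all} invariants, and the geometry of this relation is governed by the dimension of $\mathbb{C}^n \times \mathbb{C}^n$ rather than by the complexity of the invariant ring.

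First I would fix a (possibly large) generating set $\{p_1, \dots, p_N\}$ for $\mathbb{C}[x]^G$, so that $\nu_{sep}$ is precisely the common zero locus in $\mathbb{C}^n \times \mathbb{C}^n$ of the polynomials $q_i(x,y) := p_i(x) - p_i(y)$. The goal is to show that some subfamily of at most $2n+1$ of the $q_i$ already cuts out the same set. The central tool is a result on separating sets phrased via generic linear combinations: one passes to generic linear combinations $\sum_j a_{ij} q_j$ and argues that $2n+1$ such combinations suffice to carve out $\nu_{sep}$. The numerology $2n+1$ arises because $\mathbb{C}^n \times \mathbb{C}^n$ has dimension $2n$, and cutting down the complement of $\nu_{sep}$ by successive generic hyperplane-type sections requires one more than the ambient dimension of the relevant parameter space to separate any given pair of points lying in distinct orbits.

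The heart of the argument is the following geometric step. For each pair $(x,y) \notin \nu_{sep}$ there exists at least one invariant $P$ with $P(x) \neq P(y)$, hence at least one $q_i$ not vanishing at $(x,y)$. I would then invoke a Bertini- or Noether-normalization-type genericity argument: considering the incidence variety of pairs $(x,y)$ together with the linear functionals that fail to separate them, a dimension count shows that a generic choice of $2n+1$ linear combinations of the $q_i$ leaves no pair $(x,y) \notin \nu_{sep}$ with all chosen combinations vanishing. Concretely, one stratifies $(\mathbb{C}^n \times \mathbb{C}^n) \setminus \nu_{sep}$ and shows that the locus where a generic functional vanishes drops dimension, so after $2n+1$ generic cuts the bad locus is empty.

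The main obstacle, and where the real content lies, is establishing that the genericity argument is valid over the orbit quotient and that $2n+1$ is genuinely sufficient rather than merely $2n$ or some larger bound: one must handle the fact that the $q_i(x,y)$ are symmetric-type relations and that the separating condition is about \emph{all} invariants, not just the chosen generators, so one needs the theorem of Derksen--Kemper guaranteeing that any generating set of $\mathbb{C}[x]^G$ is automatically separating. With that in hand, the remaining difficulty is purely the dimension-theoretic bookkeeping: controlling the dimension of the non-separated locus as generic hyperplanes are intersected and verifying that the extra ``$+1$'' accounts for the possibility that a single generic functional might fail on a codimension-zero stratum. I expect the cleanest route is to cite or reproduce the abstract separating-set bound (as in \cite{finitegroups}) and then specialize it to the unitary finite-group setting at hand, since the full strength of the general theorem over arbitrary algebraically closed fields is more than we need here.
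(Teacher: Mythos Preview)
The paper does not supply its own proof of this statement; it is quoted verbatim as Theorem 5.1.1 of \cite{emilie} and treated as background. So there is no in-paper proof to compare against directly.

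That said, your sketch is along the correct lines and matches the standard argument in the literature. In fact the paper carries out essentially the same mechanism later, in Theorem~\ref{dimreduction}: starting from an injective polynomial invariant map $P:\mathbb{C}^n\to\mathbb{C}^N$, it homogenizes the difference map $(x,y)\mapsto P(x)-P(y)$ by introducing an auxiliary variable $t$, obtaining a regular homogeneous map $F(x,y,t)$ whose image is a projective variety of dimension at most $2n$. It then builds the incidence variety
\[
S=\{([\ell],[w]):0\neq w\in\mathrm{Im}(F),\ \ell w=0\}\subset \mathbb{P}(\mathbb{C}^{k\times N})\times\mathbb{P}(\mathbb{C}^N)
\]
and does the fiber-dimension count (via \cite{Harris}, Corollary 11.13) to conclude that for $k\ge 2n+1$ a generic linear $\ell$ has $\ker(\ell)\cap\mathrm{Im}(F)=\{0\}$. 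This is precisely the ``incidence variety plus dimension count'' you gesture at; the homogenization step and the explicit projective formulation are what turn your informal Bertini-type appeal into a proof.

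One small correction: you invoke a ``theorem of Derksen--Kemper guaranteeing that any generating set of $\mathbb{C}[x]^G$ is automatically separating.'' That is not a theorem but a triviality: if $\{p_i\}$ generates $\mathbb{C}[x]^G$ as a $\mathbb{C}$-algebra, every invariant is a polynomial in the $p_i$, so $p_i(x)=p_i(y)$ for all $i$ forces $P(x)=P(y)$ for every invariant $P$, i.e., $(x,y)\in\nu_{sep}$. No extra hypothesis on $G$ is needed for this.
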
 

Furthermore, in \cite{DerksenKemper}, section 2.3, it is noted that if $G$ is a finite group action, then 
\[
\nu_{sep} = \{ (x, g x) | x \in \mathbb{C}^n, g \in G\}.
 \]

We now specialize to the case where $G=\mathbb{Z}_m .$ Without loss of generality we can assume that the action of $G$ is given by the powers of a diagonal matrix $T=\text{diag}(t_1,...,t_n)$ where $t_i$ is an $m$th root of unity for every $i$. It is proved in Theorem 5.2.1 in \cite{emilie} that the following set of $n(n+1)/2$ monomials is a separating set:
\begin{equation}\label{monomials}
\mathcal{P}_T=\{x_i^{m_i},x_j^{a_{jk}}x_k^{b_{jk}}:1\leq i\leq n,1\leq j<k\leq n\}
\end{equation}
where $m_i|m$ and $t_i$ is a primitive $m_i$th root of unity, and $a_{jk}$ is minimal such that there exists a $b_{jk}<m_k$ with $x_j^{a_{jk}}x_k^{b_{jk}}$ invariant. From the set of measurements \eqref{monomials}, an invariant map $F_G : \mathbb{C}^n \to \mathbb{C}^{n(n + 1)/2}$ defined by
\begin{equation} \label{F_G}
F_T (x) = ({x_1}^{m_1}, \hdots , {x_n}^{m_n}, {\{x_j^{a_{jk}}x_k^{b_{jk}}\}}_{1 \leq j, k \leq n}),
\end{equation}
induces an explicit injective map $\tilde{F}_T:\mathbb{C}^n/\mathbb{Z}_m\rightarrow\mathbb{C}^{n(n+1)/2}$.  We will reduce the dimension of the target space by showing that a suitably generic linear map $\ell:\mathbb{C}^{n(n+1)/2}\to \mathbb{C}^{2n+1}$ is injective when restricted to the image of $F_T.$ In general, this should be the optimal dimension, but we know already that for specific examples this can be reduced even further to $2n-1$ as can be seen from the special cases covered by \cite{emilie}, Proposition 5.2.2 (see also the discussion on subsection \ref{PR} about real phase retrieval being a particular case of this problem).

\subsection{Phase retrieval}\label{PR}

Phase retrieval is the problem of recovering a signal $x$ in $\mathbb{C}^n$ (or $\mathbb{R}^n$) up to a global phase factor from a collection of intensity measurements $(|\langle x,\varphi_i\rangle|^2)_{i=1}^N$. This type of problem comes up in many applications and has a rich history, but has seen considerable interest in the last decade or so since the publication of \cite{BCE}. To state the problem in the setting of this paper let $\mathbb{T}=\{\lambda\in\mathbb{C}:|\lambda|=1\}$ denote the one dimensional torus and let $\mathbb{T}$ act on $\mathbb{C}^n$ by scalar multiplication. Then given any collection of vectors $\{\varphi_i\}_{i=1}^N$ the mapping $\Phi:\mathbb{C}^n\rightarrow\mathbb{R}^N$ given by
$$
\Phi(x)=(|\langle x,\varphi_i\rangle|^2)_{i=1}^N
$$
is invariant under the action of $\mathbb{T}$ so we can consider the induced map $\tilde{\Phi}$ whose domain is $\mathbb{C}^n/\mathbb{T}$. The first problem in phase retrieval is to understand when this map is injective.

Let $\mathbb{H}_n$ denote the space of $n\times n$ Hermitian matrices and note that $\mathbb{H}_n$ is a vector space over the real numbers (not the complex numbers) of dimension $n^2$. The Hilbert-Schmidt inner product on $\mathbb{H}_n$ is given by
$$
\langle S,T\rangle_{HS}=\tr(ST).
$$
Now consider the map from $\mathbb{C}^n$ to $\mathbb{H}_n$ given by $x\mapsto xx^*$. First note that $xx^*=yy^*$ if and only if $x=\lambda y$ for some $\lambda\in\mathbb{T}$, so this map is injective on $\mathbb{C}^n/\mathbb{T}$, and the image of this map is the set $\mathcal{S}$ of positive rank one matrices in $\mathbb{H}_n$ (which looks like $\mathbb{P}^{n-1}\times\mathbb{R}_+$). Next observe that for $x,y\in\mathbb{C}^n$
\begin{eqnarray*}
\langle xx^*,yy*\rangle_{HS}&=&\tr(xx^*yy^*) \\
&=&\tr(y^*xx^*y) \\
&=&|\langle x,y\rangle|^2.
\end{eqnarray*}
Given a collection of vectors $\{\varphi_i\}_{i=1}^N\subseteq\mathbb{C}^n$ define the linear map $\tilde{\Phi}:\mathbb{H}_n\rightarrow\mathbb{R}^N$ given by
$$
\tilde{\Phi}(S)=(\tr(S\varphi_i\varphi_i^*))_{i=1}^N
$$
and note that $\Phi(x)=\tilde{\Phi}(xx^*)$, so $\Phi$ is injective on $\mathbb{C}^n/\mathbb{T}$ if and only if $\tilde{\Phi}$ is injective when restricted to $\mathcal{S}$. If $\tilde{\Phi}$ is not injective on $\mathcal{S}$ then there are $xx^*\neq yy^*$ so that $\tilde{\Phi}(xx^*)=\tilde{\Phi}(yy^*)$ and so $xx^*-yy^*\in\ker(\tilde{\Phi})$. From this observation it is straightforward to prove the following (see Lemma 9 in \cite{BCMN14}):

\begin{lemma}\label{prinj}
$\tilde{\Phi}$ is injective on $\mathcal{S}$ if and only if every nonzero matrix in $\ker(\tilde{\Phi})$ has rank at least 3.
\end{lemma}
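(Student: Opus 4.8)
The plan is to prove both directions by working directly with the characterization of injectivity on the set $\mathcal{S}$ of positive rank-one Hermitian matrices in terms of the kernel of the linear map $\tilde{\Phi}$. The key algebraic fact I would exploit is that the difference of two rank-one matrices has rank at most $2$, so any element of $\mathcal{S}-\mathcal{S}$ (differences $xx^*-yy^*$) automatically has rank $\leq 2$.

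First I would prove the contrapositive of the forward direction. Suppose $\tilde{\Phi}$ has a nonzero kernel element $Q$ of rank $1$ or $2$. The goal is to write $Q = xx^* - yy^*$ for suitable $x,y \in \mathbb{C}^n$ with $xx^* \neq yy^*$, which would then give $\tilde{\Phi}(xx^*) = \tilde{\Phi}(yy^*)$, contradicting injectivity on $\mathcal{S}$. Since $Q$ is Hermitian with at most two nonzero eigenvalues, I would diagonalize it: $Q = \lambda_1 u_1 u_1^* + \lambda_2 u_2 u_2^*$ with $\lambda_1, \lambda_2$ real and $u_1, u_2$ orthonormal. The subtlety here—and what I expect to be the main obstacle—is the sign pattern of the eigenvalues. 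If both nonzero eigenvalues have the same sign, then $Q$ cannot be a difference $xx^*-yy^*$ of two positive rank-one matrices, since such a difference (when nonzero) must have exactly one positive and one negative eigenvalue (or be a nonzero scalar multiple of a single rank-one projection). So the naive decomposition does not immediately produce the required $x,y$.

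To resolve this I would argue as follows. A rank-one kernel element $Q = \pm \|x\|^2 \, \hat{x}\hat{x}^*$ can be written as $xx^* - 0 = xx^* - yy^*$ with $y=0$ (taking $xx^* \neq 0$), handling the rank-one case directly. For the rank-two case, the essential observation is that if $Q=\lambda_1 u_1u_1^*+\lambda_2 u_2u_2^*$ has eigenvalues of opposite sign, say $\lambda_1>0>\lambda_2$, then setting $x=\sqrt{\lambda_1}\,u_1$ and $y=\sqrt{-\lambda_2}\,u_2$ gives $Q=xx^*-yy^*$ with $xx^*\neq yy^*$, as desired. The remaining case, eigenvalues of the same sign, is where the real content lies: I would show that this case cannot actually occur in the kernel, or else reduce to it. The cleanest route is to note that $\mathcal{S}$ consists of \emph{positive} rank-one matrices, so the relevant differences $\mathcal{S}-\mathcal{S}$ are exactly the Hermitian matrices expressible as $xx^*-yy^*$; a short linear-algebra computation shows these are precisely the rank-$\leq 2$ Hermitian matrices whose positive and negative inertia are each at most $1$. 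Thus "rank at most $2$ and nonzero in $\ker(\tilde\Phi)$" together with membership in $\mathcal{S}-\mathcal{S}$ forces exactly the opposite-sign structure, and the construction above applies. I would package this inertia computation as the technical heart of the forward direction.

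For the converse, I would assume every nonzero matrix in $\ker(\tilde{\Phi})$ has rank at least $3$ and show $\tilde{\Phi}$ is injective on $\mathcal{S}$. If $\tilde{\Phi}(xx^*)=\tilde{\Phi}(yy^*)$ for $xx^*,yy^*\in\mathcal{S}$, then by linearity $xx^*-yy^*\in\ker(\tilde{\Phi})$. But $xx^*-yy^*$ has rank at most $2$, so by hypothesis it must be the zero matrix, giving $xx^*=yy^*$ and hence injectivity. This direction is essentially immediate once the rank-of-a-difference bound is in hand. Overall, the only delicate point is the inertia bookkeeping in the forward direction; everything else follows from the elementary facts that a difference of two rank-one matrices has rank at most two and that $xx^*=yy^*$ iff $x=\lambda y$ for some $\lambda\in\mathbb{T}$, both already established in the discussion preceding the statement.
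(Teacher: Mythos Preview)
The paper does not give a full proof of this lemma; the sentence preceding it is exactly your converse direction (if $\tilde\Phi$ fails to be injective on $\mathcal{S}$ then $\ker(\tilde\Phi)$ contains a nonzero matrix of rank at most $2$), and for the other implication it simply refers to Lemma~9 of \cite{BCMN14}. Your converse matches this and is correct.

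Your forward direction has a genuine gap at exactly the spot you flag. You need: nonzero $Q\in\ker(\tilde\Phi)$ of rank $\le 2$ $\Rightarrow$ $\tilde\Phi$ not injective on $\mathcal{S}$. When $Q$ is semidefinite (rank one, or rank two with eigenvalues of the same sign) it is \emph{not} of the form $xx^*-yy^*$ with $xx^*,yy^*\in\mathcal{S}$, and your ``cleanest route'' is circular---you invoke membership in $\mathcal{S}-\mathcal{S}$ as a hypothesis, but that is precisely what must be produced. (Also $0\notin\mathcal{S}$ as described in the paper, so the rank-one fix via $y=0$ does not work either.) This gap cannot be closed by inertia bookkeeping alone, because the statement is \emph{false} for a general linear map on $\mathbb{H}_n$: if $\ker(L)=\mathbb{R}Q$ for a fixed positive semidefinite rank-$2$ matrix $Q$, then $L$ is injective on $\mathcal{S}$ (no nonzero multiple of $Q$ has the inertia of a difference $xx^*-yy^*$), yet $\ker(L)$ contains a nonzero rank-$2$ element. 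The missing ingredient is the specific form $\tilde\Phi(S)=(\varphi_i^*S\varphi_i)_i$: if a semidefinite $Q$ lies in $\ker(\tilde\Phi)$ then each $\varphi_i^*Q\varphi_i=0$ forces $Q\varphi_i=0$, so any nonzero $u$ in the range of $Q$ is orthogonal to every $\varphi_i$; then $uu^*$ and $2uu^*$ are distinct elements of $\mathcal{S}$ both sent to $0$ by $\tilde\Phi$, and injectivity on $\mathcal{S}$ fails.
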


Since the dimension of the set of rank at most 2 $n\times n$ Hermitian matrices is $4n-4$ this led the authors of \cite{BCMN14} to conjecture that $N\geq 4n-4$ was necessary for $\Phi$ to be injective. If this were taking place over the complex numbers this would follow directly from the Projective Dimension Theorem, in \cite{CEHV15} the conjecture was proven for infinitely many values of $n$, however in \cite{V15} a counterexample is constructed with $n=4$ and $N=11$.

In most applications the measurements $\Phi(x)$ will never be exact and will be corrupted by noise of some form. Therefore we we would like the map to be not just injective but bi-Lipschitz as defined in \eqref{lulips} (here we use the quotient metric on $\mathbb{C}^n/\mathbb{T}$), however, it is shown in \cite{BCMN14} that this $\Phi$ can never be bi-Lipschitz in this sense. There are (at least) two ways of dealing with this situation. The first, as done in \cite{BCMN14}, is to modify the map to get a new map that is bi-Lipschitz. Another alternative which is explored in \cite{BD16} is to replace the quotient metric with a different metric on $\mathbb{C}^n/\mathbb{T}$ with respect to which $\Phi$ is bi-Lipschitz. In this paper we will encounter a similar situation where we will have an initial $G$-invariant map which is injective but not Lipschitz. Our approach will be to exploit a geometric property of this map to produce a new map which is still injective but also Lipschitz.

We can also study phase retrieval over the real numbers where we replace $\mathbb{C}^n$ with $\mathbb{R}^n$ and $\mathbb{T}$ with $\{1,-1\}$, which corresponds to the $\mathbb{Z}_2$ action on $\mathbb{R}^n$ given by $-I$. In this case the analysis above is still valid, but for $x\in\mathbb{R}^n$ we have that $xx^*=xx^T$ is real and symmetric, and the entries of $xx^T$ are precisely the monomials in $\mathcal{P}_{-I}$ (see \eqref{monomials}). Therefore, from this perspective real phase retrieval can be thought of as a very special case of the type of cyclic group actions that we consider in this paper.

\section{Main results}\label{abstractresults}
In this section we present a series of abstract results that yield a discriminative and stable representation $\Phi .$ It can be seen in the next section that the hypotheses needed for the existence of such $\Phi$ are satisfied by our objects of interest; we actually believe that the abstract framework here provided can find further applications in signal processing.   The construction of the map $\Phi$ rests on discriminative polynomial measurements in some possibly high dimensional space. These are later mapped to an $\mathcal{O}(n)$ dimensional space via a generic linear map. Still, this map is not necessarily Lipschitz so we appeal to its geometric properties to modify it so that it becomes Lipschitz while preserving injectivity. In the rest of this section we will present the steps described.
\subsection{Dimension reduction}
We already have that the polynomial map $F_T$ defined in \eqref{F_T} is separating. We look at the problem of reducing the dimension, and getting a Lipschitz bound. This has to be done carefully because we need to reduce the dimension while still preserving injectivity and finally controlling the distortion. The next theorem reduces the dimension.

\begin{theorem}\label{dimreduction} Let $G$ act on $\mathbb{C}^n$ and suppose $P:\mathbb{C}^n\rightarrow\mathbb{C}^N$ is a polynomial $G$-invariant map such that the induced map $\tilde{P}:\mathbb{C}^n/G\rightarrow\mathbb{C}^N$ is injective. Then for $k\geq 2n+1,$ $\ell\circ\tilde{P}$ is injective for a generic linear map $\ell:\mathbb{C}^N\rightarrow\mathbb{C}^k$.
\end{theorem}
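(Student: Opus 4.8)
The plan is to reduce injectivity of $\ell\circ\tilde P$ to a single avoidance statement about a constructible set, and then to prove that statement by a parameter count in the space of linear maps. First I would observe that $\ell\circ\tilde P$ is injective precisely when the kernel of $\ell$ misses the \emph{difference set}
\[
W:=\{P(x)-P(y):x,y\in\mathbb{C}^n\}\subseteq\mathbb{C}^N .
\]
Indeed, $\ell(P(x))=\ell(P(y))$ means $P(x)-P(y)\in\ker\ell$; since $P(x)-P(y)$ always lies in $W$, if we arrange $\ker\ell\cap(W\setminus\{0\})=\emptyset$ then $P(x)=P(y)$, and injectivity of $\tilde P$ forces $[x]=[y]$. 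Because $W$ is the image of the polynomial map $\sigma(x,y)=P(x)-P(y)$ on $\mathbb{C}^n\times\mathbb{C}^n$, Chevalley's theorem makes $W$ a constructible set, and since $\dim(\mathbb{C}^n\times\mathbb{C}^n)=2n$ we get $\dim W\le 2n$.

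Next I would set up an incidence argument in the affine space $\mathrm{Hom}(\mathbb{C}^N,\mathbb{C}^k)\cong\mathbb{C}^{kN}$ of linear maps. Consider
\[
J:=\{(w,\ell):w\in W\setminus\{0\},\ \ell(w)=0\}.
\]
For a fixed $w\neq 0$ the condition $\ell(w)=0$ imposes one nonzero linear equation on each of the $k$ rows of $\ell$, so the fiber of the first projection over $w$ has dimension $kN-k$. By the fiber dimension theorem this gives $\dim J\le \dim(W\setminus\{0\})+(kN-k)\le 2n+kN-k$. The image $B$ of the second projection $J\to\mathbb{C}^{kN}$ is exactly the set of \emph{bad} maps $\ell$ for which some nonzero $w\in W$ lies in $\ker\ell$; it is again constructible and satisfies $\dim B\le\dim J\le kN-(k-2n)$.

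Finally I would invoke the hypothesis $k\ge 2n+1$: then $k-2n\ge 1$, so $\dim B<kN=\dim\mathbb{C}^{kN}$, whence the Zariski closure $\overline B$ is a proper closed subvariety and its complement is a nonempty Zariski-open (hence dense and of full measure) set of \emph{generic} $\ell$. Any such $\ell$ satisfies $\ker\ell\cap(W\setminus\{0\})=\emptyset$, so by the first paragraph $\ell\circ\tilde P$ is injective. When $k\ge N$ one may simply take $\ell$ of full rank, making $\ker\ell=\{0\}$, so the real content is the range $2n+1\le k<N$.

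I expect the main obstacle to be one of rigor rather than of ideas: since $W$ is only constructible and not closed, every dimension estimate must be applied to its closure and the step \enquote{the image is lower-dimensional, hence the generic complement is dense} has to be routed through Chevalley's theorem and the fiber dimension theorem rather than through naive counting. A secondary point to verify carefully is that the $k$ scalar conditions packaged in $\ell(w)=0$ are genuinely independent for $w\neq 0$, since it is exactly this saving of $k$ dimensions that produces the threshold $k\ge 2n+1$.
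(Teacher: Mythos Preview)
Your argument is correct and follows the same incidence-variety/dimension-count strategy as the paper: both show that the set of bad linear maps $\ell$ (those whose kernel meets the nonzero difference set) has strictly smaller dimension than the parameter space, by bounding the dimension of an incidence set via its fibers over the difference set. The technical route differs: the paper first homogenizes $P(x)-P(y)$ by adjoining a variable $t$ so that the image becomes a genuine projective variety in $\mathbb{P}^{N-1}$, and then runs the fiber-dimension count projectively (this is why the paper writes $\dim(\mathrm{Im}(F))\le 2n+1$ and then subtracts one when passing to projective dimension). You instead stay in affine space, treat $W=\{P(x)-P(y)\}$ as a constructible set via Chevalley, and do the count there, passing to $\overline{W}$ when needed. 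Your version is a bit more direct and avoids the homogenization device; the paper's version has the minor advantage of working throughout with closed (projective) varieties so the fiber-dimension theorem from \cite{Harris} applies verbatim without any constructibility caveats. Either way the threshold $k\ge 2n+1$ falls out identically.
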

\begin{proof}First, since the components of $P$ are polynomials, we can write $P = (P_1, P_2, \hdots, P_N),$ and 
\[
P_i = \sum_{j = 1}^{c_i} p_{i,j}, 
\]
where $p_{i,j}$ is a monomial of degree $d_{i,j}.$ Then one of these monomials achieves maximum degree $d.$ Now we produce a new map $F(x, y, t)=(F_1,...,F_N)$ as follows:
\begin{eqnarray*}
f_{i,j}(x,y,t)&=&t^{d-d_{i,j}}(p_{i,j}(x)-p_{i,j}(y)) \\
F_i &=& \sum_{j = 1}^{c_i} f_{i,j}.
\end{eqnarray*} 

Note that $F$ is homogeneous and regular (it is a polynomial map), and so we know that $\mathrm{Im}(F)\subseteq\mathbb{C}^N$ is a projective variety. Therefore 
\[
\dim(\mathrm{Im}(F))\leq 2n + 1.
\]
For a linear map $\ell:\mathbb{C}^N\rightarrow\mathbb{C}^k$ if $\ell\circ\tilde{P}$ is not injective then there are $x,y\in\mathbb{C}^n$ so that $F(x,y,1)\neq 0$ but $F(x,y,1)\in\ker(\ell)$. We now claim that if $k\geq 2n+1$ then for a generic $\ell$ we have $\ker(\ell)\cap\mathrm{Im}(F)=\{0\}$ which would prove the theorem. To this end let
$$
S = \{ [\ell], [w] :0\neq w \in \mathrm{Im}(F), \ell w = 0, [\ell] \in \mathbb{P}(\mathbb{C}^{k\times N}), [w] \in \mathbb{P} (\mathbb{C}^N) \}
$$ 
(here $[\cdot]$ is a class in projective space ). It is easy to see that $S$ is projective. Let us note that we can assume without loss of generality that $S$ is irreducible(if not, reason like below on each irreducible component). Also let $\pi_1, \pi_2$ be the projections of $S$, that is: 
\[
\pi_1 ([\ell], [w]) = [\ell],  \pi_2 ([\ell], [w]) = [w].
\]
Then $\pi_2 (S) = [\mathrm{im}(F)],$ therefore, $\dim (\pi_2 (S)) = \dim (\mathrm{im}(F)) - 1\leq 2n.$ By \cite{Harris} corollary 11.13 we know that if we take $[w_0] \in \mathbb{P}^N ,$ then 
\[
\dim (S) = \dim (\pi_2^{-1} [w_0]) + \dim (\pi_2 (S)).
\]
 On the other hand, we know
\[
\dim (\pi_2^{-1} ([w_0])) = \dim (\{\ell \in \mathbb{C}^{k\times N} | \ell w_0 = 0\}) - 1 = k(N-1)-1 = kN - k -1
\]
This implies that
$$
\dim (S)\leq kN-k-1+2n.\\
$$
Next observe that
\begin{eqnarray*}
\pi_1(S)&=&\{[\ell]:\ker(\ell)\cap\mathrm{Im}(F)\neq\{0\}\} \\
&=&\{[\ell]:\ell\circ\tilde{P}\text{ is not injective}\}.
\end{eqnarray*}
Therefore if $\dim(\pi_1(S))<\dim(\mathbb{P}(\mathbb{C}^{k\times N}))=kN-1$ then $\ell\circ\tilde{P}$ is injective for a generic $\ell$. Finally,
$$
\dim(\pi_1(S))\leq\dim(S)=kN-k-1+2n,
$$
so we require
$$
kN-k-1+2n<kN-1
$$
which means $k>2n.$
\end{proof}

The main idea in the proof of Theorem \ref{dimreduction} is very similar to that of Lemma \ref{prinj} in that we want to show that a generic linear map of appropriate rank is injective when restricted to a particular algebraic variety which means that the kernel of the linear map needs to avoid differences of pairs of vectors that are on the variety. The main difference between these two results is that the varieties under consideration in the phase retrieval case they have a lot of structure, in particular they are projective, whereas ours are not. On the other hand, phase retrieval takes place over the real numbers (even in the complex case) which complicates the use of certain tools from algebraic geometry. We note that variants of these types of arguments have been used in other recovery problems \cite{WX17,RWX17,CEHV15}.

\subsection{Non-parallel maps induce Lipschitz invariant representations}\label{Lipschitz map}
As we anticipated, we will discuss other general theorems that let us make modifications to turn a map into a Lipschitz one, provided the map we started with satisfies a geometric condition. To that end, we introduce the following concept:
\begin{definition}\label{non-parallel prop.} Suppose $G$ acts on $\mathbb{C}^n$ and $F:\mathbb{C}^n\rightarrow\mathbb{C}^N$ is $G$-invariant. We say $F$ has the non-parallel property if the following holds: If $\|x\|=\|y\|=1$ and $F(x) = \lambda F(y)$ for some $\lambda > 0,$ then $x=gy$ for some $g\in G$.
\begin{remark} \label{NPPpreserved} Note that if $F:\mathbb{C}^n \to \mathbb{C}^N$ satisfies the non-parallel property, and $\ell : \mathbb{C}^N \to \mathbb{C}^m$ is a linear map such that
\[
\ker(\ell) \cap (\mathrm{Im} F - \mathrm{Im} F) = \{0\},
\]
then $\ell \circ F$ also satisfies the non-parallel property.
\end{remark}
\end{definition}
The following will be used to get a candidate for a Lipschitz map.
\begin{definition} For any map $F:\mathbb{C}^n \to \mathbb{C}^N,$ we define $\Phi_F$ by 
\begin{equation}\label{finalmap}
\Phi_F(x):= 
\left\{
\begin{matrix}
\norm{x} F\left( \frac{x}{\norm{x}}\right) & \mbox{, if } x \neq 0 \\
0 & \mbox{, if } x = 0.
\end{matrix}
\right.
\end{equation}
\end{definition}
\begin{proposition} \label{finalmapinjectivity}
Suppose $G$ acts on $\mathbb{C}^n$ according to \eqref{assumption unitary} and $F:\mathbb{C}^n\rightarrow\mathbb{C}^N$ is a $G$-invariant map. Then $\Phi_F$ as defined in \eqref{finalmap} satisfies \\
(a) $\Phi_F$ is also $G$-invariant, \\
(b) If $F$ has the non-parallel property and if the induced map $\tilde{F}:\mathbb{C}^n/G\rightarrow\mathbb{C}^N$ is injective then the corresponding induced map $\tilde{\Phi}_F$ is also injective.
\end{proposition}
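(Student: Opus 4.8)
The plan is to treat (a) and (b) separately: part (a) rests only on unitarity of the action together with $G$-invariance of $F$, while part (b) combines the non-parallel property with the injectivity of $\tilde{F}$.

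For part (a), the key point is that each $\sigma(g)$ is unitary, so it preserves norms: $\|gx\|=\|x\|$. First I would dispose of $x=0$, where both sides of the claimed identity vanish since $\Phi_F(0)=0$ and $g\cdot 0=\sigma(g)0=0$. For $x\neq 0$, invertibility of $\sigma(g)$ gives $gx\neq 0$, and I would compute
$\Phi_F(gx)=\|gx\|\,F\!\left(gx/\|gx\|\right)=\|x\|\,F\!\left(g\left(x/\|x\|\right)\right)$,
using $\|gx\|=\|x\|$ and the linearity identity $gx/\|x\|=g\left(x/\|x\|\right)$. Applying $G$-invariance of $F$ to the unit vector $x/\|x\|$ turns $F\!\left(g\left(x/\|x\|\right)\right)$ into $F\!\left(x/\|x\|\right)$, so $\Phi_F(gx)=\|x\|\,F\!\left(x/\|x\|\right)=\Phi_F(x)$.

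For part (b), suppose $\Phi_F(x)=\Phi_F(y)$; I must show $[x]=[y]$. Writing $a=\|x\|$, $b=\|y\|$, $u=x/\|x\|$, $v=y/\|y\|$ (unit vectors) in the case $x,y\neq 0$, the hypothesis reads $a\,F(u)=b\,F(v)$. In the principal case $F(u),F(v)\neq 0$, these are positive scalar multiples of one another, namely $F(u)=(b/a)F(v)$ with $b/a>0$, so the non-parallel property yields $u=gv$ for some $g\in G$. Then $G$-invariance of $F$ gives $F(u)=F(v)$, whence $(a-b)F(v)=0$ forces $a=b$; combined with $u=gv$ this gives $x=au=a(gv)=g(av)=g(bv)=gy$, so $[x]=[y]$.

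The main obstacle is the degenerate case in which $F$ vanishes: if $F(u)=0$ the non-parallel property becomes vacuous and the norm information $a=b$ is lost, and separately one must rule out a spurious collision of some $x\neq 0$ with $\Phi_F(0)=0$. The resolution is to show that $F$ cannot vanish on the unit sphere at all. Indeed, for the measurement maps under consideration one has $F(0)=0$, so if some unit vector $v$ satisfied $F(v)=0=F(0)$, injectivity of $\tilde{F}$ would force $[v]=[0]$, which is impossible because every element of $[0]=\{\sigma(g)0:g\in G\}$ equals $0$ while $\|v\|=1$. Hence $F(w)\neq 0$ for every unit $w$, so $\Phi_F(x)=0$ if and only if $x=0$. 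This simultaneously eliminates the mixed case ($x=0$, $y\neq 0$, which cannot give equal values) and guarantees $F(u),F(v)\neq 0$ above, so the principal case is the only one that occurs. I expect establishing this non-vanishing on the sphere to be the delicate step, since it is precisely where injectivity of $\tilde{F}$—rather than the non-parallel property alone—is indispensable.
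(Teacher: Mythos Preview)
Your argument follows the same path as the paper's: unitarity for (a), and for (b) the non-parallel property to align the normalized vectors followed by a norm comparison. You are in fact more careful than the paper on two points. First, you explicitly extract $\|x\|=\|y\|$ from $(a-b)F(v)=0$, which the paper elides (its final sentence ``since $\tilde{F}$ is injective we conclude $x=gy$'' skips precisely this). Second, you isolate the degenerate case where $F$ might vanish on the sphere, which the paper ignores, simply asserting that $\Phi(y)=0$ forces $y=0$.

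Your resolution of that degenerate case via $F(0)=0$ is correct and in fact necessary: without some such hypothesis the proposition as stated fails (take $G$ trivial, $n=N=1$, $F(x)=x-1$; then $F$ is $G$-invariant, $\tilde{F}$ is injective, the non-parallel property holds on the unit circle, yet $\Phi_F(0)=\Phi_F(1)=0$). The assumption $F(0)=0$ is satisfied by every polynomial invariant map $F_T$ used elsewhere in the paper, so the issue is harmless in context, but you are right that it is not part of the stated hypotheses and that it is exactly where the injectivity of $\tilde{F}$ does real work.
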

\begin{proof} Throughout the proof, we write $\Phi$ instead $\Phi_F$.

\noindent (a) Note that if $x \in \mathbb{C}^n$ and $g \in G,$ then
\[
\Phi (gx) = \|gx\| F \left( \frac{gx}{\|gx\|} \right) = \|x\| F \left( \frac{1}{\|x\|} gx \right) = \|x\| F\left(g \left( \frac{x}{\|x\|}\right) \right)
\] 
Where we have used our general assumption \eqref{assumption unitary}. Since $F$ is invariant, we have that
\[
\Phi (gx) = \|x\| F \left( \frac{x}{\|x\|}\right) = \Phi (x).
\]
So $\Phi$ is invariant.\\
(b) Suppose $x, y \in \mathbb{C}^n $ are such that
\[
\Phi (x) = \Phi (y).
\]
If $x = 0,$ then 
\[
0 = \Phi (x) = \Phi (y),
\]
which implies that $y = 0 = x.$ Now if $x \neq 0,$ we have that 
\[
\|x\| F\left(\frac{x}{\|x\|}\right) = \|y\| F\left(\frac{y}{\|y\|}\right),
\]
So in particular $F\left(\frac{x}{\|x\|}\right)$ and $F\left(\frac{y}{\|y\|}\right)$ are parallel. Since $F$ satisfies the non-parallel property, we have that $\frac{x}{\|x\|} =g\left( \frac{y}{\|y\|}\right)$ for some $g\in G$. This implies that
\[
F\left(\frac{x}{\|x\|}\right) = F\left(\frac{y}{\|y\|}\right),
 \]
and since $\tilde{F}$ is injective we conclude $x=gy$ which means $\tilde{\Phi}$ is also injective.
\end{proof}

\begin{remark}
Note that in the proof of (b) we really show that when $F$ has the non-parallel property then $\tilde{\Phi}_F(x)=\tilde{\Phi}_F(y)$ if and only if $\tilde{F}(x)=\tilde{F}(y)$ regardless of whether or not $\tilde{F}$ is injective.
\end{remark}

In what follows, $\mathbb{S}$ will denote the unit sphere in $\mathbb{C}^n.$

\begin{theorem} \label{generalthm} Let $G$ be a unitary group acting on $\mathbb{C}^n$ according to \eqref{assumption unitary}. Let $H:\mathbb{C}^n\to \mathbb{C}^N$ be a $G$-invariant map satisfying the non-parallel property. Assume $H$ is $C^1 .$ Then the map $\Phi_H : \mathbb{C}^n \to \mathbb{C}^{N}$ defined in \eqref{finalmap} is Lipschitz. Furthermore, the optimal Lipschitz constant $\alpha$ such that $\|\tilde{\Phi}_H ([x]) - \tilde{\Phi}_H ([y])\| \leq \alpha d_G([x],[y])$ satisfies $\alpha \leq 3C,$ where
\[
C = \max \left\{ \norm{\nabla_{\mathbb{S}} H}_\infty, \max_{z \in \mathbb{S}} \|H(z)\|\right\}.
\]
\end{theorem}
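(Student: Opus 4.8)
The plan is to prove the stronger statement that $\Phi_H$ is $3C$-Lipschitz on all of $\mathbb{C}^n$ with respect to the Euclidean norm, and then transfer this to the quotient using invariance. Indeed, by Proposition \ref{finalmapinjectivity}(a) the map $\Phi_H$ is $G$-invariant, so for every $g\in G$ we have $\Phi_H(y)=\Phi_H(gy)$ and hence $\|\Phi_H(x)-\Phi_H(y)\|=\|\Phi_H(x)-\Phi_H(gy)\|\le 3C\,\|x-gy\|$; taking the infimum over $g\in G$ and invoking \eqref{qmetric} gives $\|\tilde\Phi_H([x])-\tilde\Phi_H([y])\|\le 3C\,d_G([x],[y])$, which yields $\alpha\le 3C$. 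Thus everything reduces to the inequality $\|\Phi_H(x)-\Phi_H(y)\|\le 3C\|x-y\|$ for all $x,y\in\mathbb{C}^n$.

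\textbf{Preliminaries.} First I would dispose of the degenerate case: if $y=0$ then $\|\Phi_H(x)-\Phi_H(0)\|=\|x\|\,\|H(x/\|x\|)\|\le C\|x\|=C\|x-0\|$, since $\max_{z\in\mathbb{S}}\|H(z)\|\le C$. The next ingredient is that $H$ is Lipschitz on the sphere, with respect to the geodesic distance $d_{\mathbb{S}}$, with constant $\|\nabla_{\mathbb{S}}H\|_\infty$. To see this I connect two unit vectors $u,v$ by a minimizing geodesic $\gamma$ of unit speed and length $\theta:=d_{\mathbb{S}}(u,v)$; since $\gamma'$ is tangent to the sphere,
\[
\|H(u)-H(v)\|\le\int_0^\theta\|\nabla_{\mathbb{S}}H(\gamma(t))\|\,dt\le\|\nabla_{\mathbb{S}}H\|_\infty\,\theta\le C\,\theta .
\]

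\textbf{The main estimate.} Now take $x,y\neq 0$ and assume without loss of generality $s:=\|y\|\le\|x\|=:r$. Writing $u_x=x/\|x\|$ and $u_y=y/\|y\|$, the key algebraic step is the decomposition
\[
\Phi_H(x)-\Phi_H(y)=(r-s)\,H(u_x)+s\bigl(H(u_x)-H(u_y)\bigr),
\]
where I deliberately pair the \emph{smaller} radius $s$ with the difference $H(u_x)-H(u_y)$. The first term is bounded by $C\,|r-s|\le C\,\bigl|\,\|x\|-\|y\|\,\bigr|\le C\|x-y\|$. Using the sphere-Lipschitz estimate above, the second term is at most $C\,s\,\theta$ with $\theta=d_{\mathbb{S}}(u_x,u_y)$, so the whole argument comes down to the geometric inequality $s\,\theta\le 2\|x-y\|$, which then produces the missing $2C\|x-y\|$ and the total bound $3C\|x-y\|$.

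\textbf{The main obstacle.} The step I expect to require the most care is precisely this inequality $s\,\theta\le 2\|x-y\|$, where $\theta\in[0,\pi]$ is the angle between $x$ and $y$, so that $\|x-y\|^2=r^2-2rs\cos\theta+s^2$. I would establish it in two moves. Viewing the right-hand side as a function of $r$ on $[s,\infty)$, its vertex $r=s\cos\theta$ lies in $(-\infty,s]$, so the minimum over $r\ge s$ is attained at $r=s$, giving
\[
\|x-y\|\ge 2s\sin(\theta/2).
\]
Second, the elementary inequality $\theta\le 4\sin(\theta/2)$ on $[0,\pi]$ (valid since $\theta/\sin(\theta/2)\le\pi<4$ there) yields $s\,\theta\le 4s\sin(\theta/2)\le 2\|x-y\|$, as required. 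Note that this purely algebraic decomposition sidesteps any difficulty about the segment $[x,y]$ passing through the origin, where $\Phi_H$ fails to be $C^1$; no path integration is needed beyond the tangential mean value bound on the sphere. Assembling the two contributions completes the proof that $\alpha\le 3C$.
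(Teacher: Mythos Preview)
Your proof is correct and follows essentially the same line as the paper's: both reduce to a Euclidean Lipschitz bound via $G$-invariance, handle the $y=0$ case by $\max_{\mathbb S}\|H\|$, split $\Phi_H(x)-\Phi_H(y)$ into a radial term controlled by $\bigl|\|x\|-\|y\|\bigr|$ and an angular term paired with the smaller radius, and then bound that angular contribution by $2C\|x-y\|$. The one technical difference is in that last geometric step: you work with the geodesic angle $\theta=d_{\mathbb S}(u_x,u_y)$ and prove $s\,\theta\le 2\|x-y\|$ via the minimization $\|x-y\|\ge 2s\sin(\theta/2)$ together with $\theta\le\pi\sin(\theta/2)<4\sin(\theta/2)$, whereas the paper uses the chord distance $\|u_x-u_y\|$ and obtains $s\,\|u_x-u_y\|\le 2\|x-y\|$ directly from the triangle inequality
\[
\|y\|\,\|u_x-u_y\|=\bigl\|\tfrac{\|y\|}{\|x\|}x-y\bigr\|\le\|x-y\|+\bigl|\|x\|-\|y\|\bigr|.
\]
Both arguments yield the constant $3C$; the paper's is a touch more elementary, while yours has the minor advantage that the mean-value estimate $\|H(u)-H(v)\|\le\|\nabla_{\mathbb S}H\|_\infty\,d_{\mathbb S}(u,v)$ is exactly what integration along a geodesic gives.
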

\begin{proof}
We will drop the subscript $H$ in order to ease the notation. Let $x, y \in \mathbb{C}^n.$ If $x = 0$ and $y \neq 0,$ then
\[
\norm{\Phi (0) - \Phi (y)} = \|y\| \norm{H\left( \frac{y}{\|y\|}\right)} \leq \max_{ \|x\| = 1} \|H(x)\| \|y\| = \max_{ \|x\| = 1} \|H(x)\| d_{G} ([y], [0]).
\]
Now if $x, y \neq 0,$ then 
\begin{eqnarray*}
\norm{\Phi (x) - \Phi (y)} &=& \norm{\|x\| H\left(\frac{x}{\|x\|}\right) - \|y\| H\left(\frac{y}{\|y\|}\right)} \\
  & \leq & \|x\| \norm{H\left(\frac{x}{\|x\|}\right) - H\left(\frac{y}{\|y\|}\right)} + \vert \|x\| - \|y\| \vert \norm{H\left(\frac{y}{\|y\|}\right)}. 
\end{eqnarray*}
Since $H$ is bounded on the sphere, we have that
\[
\vert \|x\| - \|y\| \vert \norm{H\left(\frac{y}{\|y\|}\right)} \leq \|x - y\| \max_{z \in \mathbb{S}} \|H(z)\|.
\]
This implies that
\[
\norm{\Phi (x) - \Phi (y)} \leq \|x\| \norm{H\left(\frac{x}{\|x\|}\right) - H\left(\frac{y}{\|y\|}\right)} + \|x - y\| \max_{z \in \mathbb{S}} \|H(z)\|.
\]
By symmetry on $x$ and $y,$ we have that
\[
\norm{\Phi (x) - \Phi (y)} \leq \min\left\{ \|x\|, \|y\| \right\} \norm{H\left(\frac{x}{\|x\|}\right) - H\left(\frac{y}{\|y\|}\right)} + \|x - y\| \max_{z \in \mathbb{S}} \|H(z)\|.
\]
We know that $H$ is differentiable when restricted to the unit sphere $\mathbb{S}$. Suppose $x, y \in \mathbb{C}^n.$ Therefore we can use the mean value theorem to find 
\[
\norm{H\left(\frac{x}{\|x\|}\right) - H\left(\frac{y}{\|y\|}\right)} \leq  \norm{\nabla_{\mathbb{S}} H}_\infty \norm{\frac{x}{\|x\|} - \frac{y}{\|y\|}}.
\]
Hence
\[
\min\left\{ \|x\|, \|y\| \right\} \norm{H\left(\frac{x}{\|x\|}\right) - H\left(\frac{y}{\|y\|}\right)} \leq \min\left\{ \|x\|, \|y\| \right\} \norm{\nabla_{\mathbb{S}} H}_\infty \norm{\frac{x}{\|x\|} - \frac{y}{\|y\|}}.
\]
Then if we let 
\[
C = \max \left\{ \norm{\nabla_{\mathbb{S}} H}_\infty, \max_{z \in \mathbb{S}} \|H(z)\| \right\},
\]
we will have that
\[
\|\Phi (x) - \Phi (y)\| \leq C \left( \min\left\{ \|x\|, \|y\| \right\} \norm{\frac{x}{\|x\|} - \frac{y}{\|y\|}} + \|x - y\| \right).
\]
If $\|y\| \leq \|x\|,$ then
\begin{eqnarray}
\min \{ \|x\|, \|y\|\} \norm{\frac{x}{\|x\|} - \frac{y}{\|y\|}}& = & \|y\| \norm{\frac{x}{\|x\|} - \frac{y}{\|y\|}}\nonumber\\
                                                                                         & = & \norm{\frac{\|y\| x}{\|x\|} - y}\nonumber\\
                                                                                         & \leq & \|x - y\| + \norm{\frac{\|y\| x}{\|x\|} - x}\nonumber\\
                                                                                         & = & \|x - y\| + \left| \frac{\|y\|}{\|x\|} - 1 \right| \|x\|,\nonumber
\end{eqnarray}
from where we obtain
\[
\min\left\{ \|x\|, \|y\| \right\} \norm{\frac{x}{\|x\|} - \frac{y}{\|y\|}} \leq 2\|x - y\|.
\]
This implies that 
\[
\|\Phi (x) - \Phi (y)\| \leq 3C \|x - y\|.
\]
Since $\Phi$ is invariant, we know that for all $g \in G,$ we have that
\[
\| \Phi (x) -\Phi (y) \| = \| \Phi (x) - \Phi (gy) \| \leq 3C \|x - gy\|,
\]
therefore
\[
\| \Phi (x) - \Phi (y) \| \leq 3C \min_{g \in G} \|x - gy\| = 3C d_G([x],[y]).
\]
This gives the Lipschitz bound.
\end{proof}

 As mentioned in the introduction, an ideal representation should not only be injective but also control the distance between classes in terms of a lower Lipschitz bound. Because the representation we construct is made of linear combination of polynomials with no particular structure (as opposed to phase retrieval where the measurements take the particular form $|\langle x, \phi_k\rangle |^2_{k=1, \ldots , M}$), we cannot expect to have a lower Lipschitz bound at our level of generality. Moreover, we will see in the next section that when $G=\mathbb{Z}_m , m\geq 3.$ the map $\Phi$ is not bi-Lipschitz.

In phase retrieval the map $\ell$ sends the measurements to $(\mathbb{R}_+)^N$ for some $N,$ and since the map is homogeneous of degree 2, one can take the square root of each component without destroying separation while making the map bi-Lipschitz. Our separating measurements are essentially complex valued-the phases contain essential information one cannot ignore, and so we cannot appeal to a similar idea.

\section{Applications to cyclic groups}\label{proofofmain} 
In this section, we discuss a few applications of the theorems in the previous section. Recall that in the case of $G=\mathbb{Z}_m$ we can assume the action is given by the powers of a diagonal matrix $T=\mathrm{diag}(t_1,...,t_m)$ where each $t_i$ is an $m$th root of unity.
\begin{theorem} \label{cyclic case}
Suppose we have the group $\mathbb{Z}_m$ acting on $\mathbb{C}^n,$  then we have an injective map 
\[
\tilde{\Phi}:\mathbb{C}^n / \mathbb{Z}_m \to \mathbb{C}^{2n+1}.
\]
Moreover, $\tilde{\Phi}$ has a Lipschitz constant $3 m \|\ell \|.$
\end{theorem}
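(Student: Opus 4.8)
The plan is to assemble Theorem \ref{cyclic case} entirely from the three machinery results already proved, applied to the explicit separating map $F_T$ from \eqref{F_G}. The key observation is that all three ingredients are needed in sequence: we start from $F_T:\mathbb{C}^n\to\mathbb{C}^{n(n+1)/2}$, which is known (via \cite{emilie}, Theorem 5.2.1) to induce an injective map $\tilde{F}_T$ on $\mathbb{C}^n/\mathbb{Z}_m$; we compress its target with a generic linear map using Theorem \ref{dimreduction}; and then we upgrade the compressed map to a Lipschitz one using Theorem \ref{generalthm}, provided the non-parallel property survives. So the first step is to invoke Theorem \ref{dimreduction} with $P=F_T$, $N=n(n+1)/2$, and $k=2n+1$: this produces a generic linear map $\ell:\mathbb{C}^{n(n+1)/2}\to\mathbb{C}^{2n+1}$ such that $\ell\circ\tilde{F}_T$ is still injective on $\mathbb{C}^n/\mathbb{Z}_m$.

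The second step is to verify that the composite $\ell\circ F_T$ satisfies the non-parallel property of Definition \ref{non-parallel prop.}, so that Theorem \ref{generalthm} applies. Here I would first argue that $F_T$ itself is non-parallel: if $\|x\|=\|y\|=1$ and $F_T(x)=\lambda F_T(y)$ for some $\lambda>0$, then comparing the coordinates $x_i^{m_i}=\lambda y_i^{m_i}$ and the mixed monomials $x_j^{a_{jk}}x_k^{b_{jk}}=\lambda y_j^{a_{jk}}y_k^{b_{jk}}$, one shows by the same root-of-unity bookkeeping used in the prime case of the introduction that $\lambda$ must actually equal $1$ (using $\|x\|=\|y\|$ to pin down the scalar) and that $x=gy$ for some $g\in\mathbb{Z}_m$. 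Then I would appeal to Remark \ref{NPPpreserved}: since the generic $\ell$ from Theorem \ref{dimreduction} can be chosen so that $\ker(\ell)\cap(\mathrm{Im}\,F_T-\mathrm{Im}\,F_T)=\{0\}$ (this is exactly the condition the genericity argument produces, as the differences of image points are what the kernel must avoid), the non-parallel property is inherited by $\ell\circ F_T$.

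The third step is to set $H=\ell\circ F_T$ and apply Theorem \ref{generalthm} to obtain $\Phi:=\Phi_H$, which is then Lipschitz with constant $3C$, where $C=\max\{\|\nabla_{\mathbb{S}}H\|_\infty,\max_{z\in\mathbb{S}}\|H(z)\|\}$. The injectivity of $\tilde{\Phi}$ follows from Proposition \ref{finalmapinjectivity}(b), since $H$ is non-parallel and $\tilde{H}=\ell\circ\tilde{F}_T$ is injective by step one. To get the claimed constant $3m\|\ell\|$ rather than an abstract $3C$, I would estimate $C$ explicitly: because $F_T$ consists of monomials of degree dividing $m$ (the $x_i^{m_i}$ with $m_i\mid m$ and mixed terms $x_j^{a_{jk}}x_k^{b_{jk}}$ of total degree at most $m$), both $\|F_T(z)\|$ and the tangential gradient $\|\nabla_{\mathbb{S}}F_T\|$ are controlled on the unit sphere by a factor of $m$, and composing with the linear map $\ell$ contributes the operator-norm factor $\|\ell\|$.

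The main obstacle I anticipate is the non-parallel verification in step two, specifically ruling out $\lambda\neq 1$. In the prime case sketched in the introduction the argument is clean because the exponents are rigid, but for general $m$ with composite structure the exponents $a_{jk},b_{jk},m_i$ are only constrained by divisibility and minimality conditions, so the bookkeeping that forces $\lambda=1$ and extracts a single group element $g$ acting simultaneously on all coordinates is more delicate; one must use $\|x\|=\|y\|=1$ together with the fact that the monomials genuinely separate orbits to conclude. The constant-tracking in step three is routine by comparison, amounting to bounding derivatives of bounded-degree monomials on a compact set, but care is needed to absorb all degree-dependent factors cleanly into a single $m$.
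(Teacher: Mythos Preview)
Your proposal is correct and follows essentially the same approach as the paper: start from the separating map $F_T$, verify its non-parallel property (this is Lemma \ref{F_G NPP} in the paper, and the argument you anticipate---forcing $\lambda=1$ via the norm constraint after root-of-unity bookkeeping---is exactly how it is done), compress with a generic $\ell$ via Theorem \ref{dimreduction}, transfer the non-parallel property through Remark \ref{NPPpreserved}, and then invoke Proposition \ref{finalmapinjectivity} and Theorem \ref{generalthm} to obtain injectivity and the Lipschitz bound, finishing with the estimate $\max\{\|\nabla_{\mathbb{S}}F_T\|_\infty,\|F_T\|_\infty\}\leq m$ (Lemma \ref{constants estimate}). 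The only minor imprecision is that the bound by $m$ comes from controlling the individual exponents $m_i,a_{jk},b_{jk}\leq m$ rather than the total degrees, but this does not affect the argument.
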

This theorem will be a consequence of the following lemmas:
\begin{lemma} \label{F_G NPP}
$F_T$ defined in \eqref{F_G} satisfies the non-parallel property.
\end{lemma}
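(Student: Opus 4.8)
The plan is to reduce the non-parallel property to the injectivity of $\tilde{F}_T$ recorded in Section~\ref{alg}, by showing that the hypotheses $\|x\|=\|y\|=1$ and $F_T(x)=\lambda F_T(y)$ with $\lambda>0$ force $\lambda=1$. Once $\lambda=1$ we have $F_T(x)=F_T(y)$, so that $\tilde{F}_T([x])=\tilde{F}_T([y])$, and injectivity immediately yields $x=T^s y$ for some $s$, which is exactly the conclusion required in Definition~\ref{non-parallel prop.}. The subtlety to keep in mind is that $F_T$ is \emph{not} homogeneous of a single degree: the pure powers $x_i^{m_i}$ and the mixed monomials $x_j^{a_{jk}}x_k^{b_{jk}}$ generally carry different degrees, so one cannot simply absorb $\lambda$ into a rescaling of $y$ and quote injectivity directly. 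The key observation is that the pure-power entries alone already determine $\lambda$.

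First I would use only the diagonal components. Matching the entries $x_i^{m_i}$ on both sides of $F_T(x)=\lambda F_T(y)$ gives $x_i^{m_i}=\lambda y_i^{m_i}$ for every $1\leq i\leq n$. Taking moduli and using $\lambda>0$ produces $|x_i|^{m_i}=\lambda\,|y_i|^{m_i}$, hence
\[
|x_i|^2=\lambda^{2/m_i}\,|y_i|^2, \qquad 1\leq i\leq n,
\]
and in particular $x_i=0$ if and only if $y_i=0$.

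Next I would bring in the normalization. Summing the previous identity over $i$ and using $\sum_i|x_i|^2=\|x\|^2=1=\|y\|^2=\sum_i|y_i|^2$ gives $\sum_{i=1}^{n}\lambda^{2/m_i}|y_i|^2=\sum_{i=1}^{n}|y_i|^2$. I would then consider the function $\phi(t)=\sum_{i=1}^{n}|y_i|^2\,t^{2/m_i}$ on $(0,\infty)$. Since $\|y\|=1$, at least one coefficient $|y_i|^2$ is strictly positive, and every exponent $2/m_i$ is positive, so $\phi$ is strictly increasing; as $\phi(\lambda)=1=\phi(1)$, strict monotonicity forces $\lambda=1$, and the reduction above completes the proof.

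The only place that requires care is this monotonicity step, precisely because the exponents $2/m_i$ differ from one coordinate to another. This is exactly where the mixed-degree nature of $F_T$ could have been an obstacle, but it turns out to be harmless: strict monotonicity of each summand $t\mapsto t^{2/m_i}$ is all that is needed, so the varying $m_i$ cause no difficulty, and the possible vanishing of some coordinates is absorbed by the observation $x_i=0\iff y_i=0$. Notably, the mixed monomials $x_j^{a_{jk}}x_k^{b_{jk}}$ play no role in fixing $\lambda$; they enter only at the final step, through the injectivity of $\tilde{F}_T$. I therefore expect no serious obstacle beyond carefully bookkeeping the zero coordinates.
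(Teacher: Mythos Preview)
Your argument is correct. It is also genuinely shorter than the paper's. The paper introduces the rescaled point
\[
\tilde y=\bigl(y_1\lambda^{1/m_1},\ldots,y_n\lambda^{1/m_n}\bigr),
\]
and first proves $\lambda F_T(y)=F_T(\tilde y)$; checking this for the mixed monomials $x_j^{a_{jk}}x_k^{b_{jk}}$ is the bulk of the work and requires a case analysis together with the identity $\lambda=\lambda^{a_{jk}/m_j+b_{jk}/m_k}$ extracted from the hypothesis. Only after concluding $x\sim\tilde y$ does the paper invoke unitarity to get $\|\tilde y\|=\|x\|=1$ and then the same monotonicity step you use to force $\lambda=1$. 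You bypass the detour through $\tilde y$ entirely: the pure-power entries already give $|x_i|^2=\lambda^{2/m_i}|y_i|^2$, and summing under the normalization $\|x\|=\|y\|=1$ pins down $\lambda=1$ immediately, after which injectivity of $\tilde F_T$ finishes. The paper's route makes visible that $F_T$ intertwines the $\lambda$-scaling with the coordinatewise rescaling $y\mapsto\tilde y$, but your approach is more economical and uses strictly less of the structure of $F_T$.
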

\begin{proof}
Suppose we have $x, y \in \mathbb{C}^n$ and $\lambda > 0$ such that
\[
F_T (x) = \lambda F_T (y).
\]
Let $\omega$ be the first $m$th root of unity and
\[
\tilde y = (y_1 {\lambda}^{\frac{1}{m_1}}, y_2 {\lambda}^{\frac{1}{m_2}}, \hdots, y_k {\lambda}^{\frac{1}{m_k}}, \hdots, y_n {\lambda}^{\frac{1}{m_n}}).
\]
We will prove that $\lambda F_T (y) = F_T (\tilde{y}).$ Let $S = \{ i \,:\, y_i \neq 0\}.$ Note that 
\[
\lambda {y_k}^{m_k} = {({\lambda}^{\frac{1}{m_k}} y_k)}^{m_k} = {{\tilde y}_k}^{m_k}.
\]
To prove that $\lambda y_j^{a_{jk}}y_k^{b_{jk}} = \tilde y_j^{a_{jk}}\tilde y_k^{b_{jk}},$ there are two cases:\\
\underline{Case 1:} One of $j, k$ is not in $S.$\\
In this case, one of $y_j, y_k$ is $0$ (by definition of $S$) and by the definition of $\tilde{y}_j,$ one of $\tilde{y}_j, \tilde{y}_k$ is $0.$ Therefore, 
\[
\lambda y_j^{a_{jk}}y_k^{b_{jk}} = 0 = \tilde y_j^{a_{jk}}\tilde y_k^{b_{jk}}.
\] 
\underline{Case 2:} Both $j, k \in S.$\\
In this case, we know $y_j, y_k \neq 0.$ We have $\lambda {y_j}^{m_j} = {x_j}^{m_j},$ and so
\[
{\left(\frac{x_j}{y_j}\right)}^{m_j} = \lambda.
\]
Then ${\left(\frac{x_j}{{\lambda}^{\frac{1}{m_j}}y_j}\right)}^{m_j} = 1.$ Since $m_j | m,$ there is a $p_j$ such that $\frac{x_j}{{\lambda}^{\frac{1}{m_j}}y_j} = {\omega}^{p_j},$ which implies that 
\[
\frac{x_j}{y_j} = {\omega}^{p_j} {\lambda}^{\frac{1}{m_j}}.
\]
We know that $\lambda y_j^{a_{jk}}y_k^{b_{jk}} = x_j^{a_{jk}}x_k^{b_{jk}},$ which implies that
\[
\lambda =  {\left(\frac{x_j}{y_j}\right)}^{a_{jk}}{\left(\frac{x_k}{y_k}\right)}^{b_{jk}}, 
\]
and since $\frac{x_j}{y_j} = {\omega}^{p_j} {\lambda}^{\frac{1}{m_j}}, \frac{x_k}{y_k} = {\omega}^{p_k} {\lambda}^{\frac{1}{m_k}},$ we have
\[
\lambda = {\left({\omega}^{p_j} {\lambda}^{\frac{1}{m_j}}\right)}^{a_{jk}} {\left({\omega}^{p_k} {\lambda}^{\frac{1}{m_k}}\right)}^{b_{jk}} = {\lambda}^{\frac{a_{jk}}{m_i} + \frac{b_{jk}}{m_k}} {\omega}^{p_j a_{jk} + p_k b_{jk}}.
\]
Taking the modulus on both sides yields $\lambda = {\lambda}^{\frac{a_{jk}}{m_j} + \frac{b_{jk}}{m_k}},$ hence 
\[
\lambda y_j^{a_{jk}}y_k^{b_{jk}} = {\lambda}^{\frac{a_{jk}}{m_j} + \frac{b_{jk}}{m_k}} y_j^{a_{jk}}y_k^{b_{jk}} = {y_j {\lambda}^{\frac{1}{m_j}}}^{a_{jk}} {y_k {\lambda}^{\frac{1}{m_k}}}^{b_{jk}} = \tilde y_j^{a_{jk}}\tilde y_k^{b_{jk}}.
\]
Therefore, 
\[
\lambda F_T (y) = F_T (\tilde y),
\]
which implies that $x \sim \tilde y.$ Thus for some $1 \leq k \leq n$ we have that 
\[
\tilde y = T^k x.
\] 
Since T is unitary, we have that
\[
\sum_{j = 1}^n {\vert y_j\vert}^2 = {\|x\|}^2 = {\|T^k x\|}^2 = {\|\tilde y\|}^2 = \sum_{j = 1}^n {\lambda}^{\frac{2}{m_j}} {\vert y_j\vert}^2.
\]
Since $y \neq 0$ (in fact $\|y\| = 1$), we know that $\sum_{j = 1}^n {\lambda}^{\frac{2}{m_j}} {\vert y_j\vert}^2$ is increasing in $\lambda.$ Hence $\lambda = 1.$ This implies that $\Phi (x) = \Phi (\tilde y) = \Phi (y).$ Since $\tilde{\Phi}$ is injective, $x \sim y.$ This proves the non-parallel property.\\
\end{proof}
\begin{lemma} The map $H = \ell \circ F_T ,$ where $F_T:\mathbb{C}^n \to \mathbb{C}^{n(n + 1)/2}$ is the map defined in \eqref{F_G}  and $\ell$ is any map satisfying the conclusion of Theorem \ref{dimreduction}, satisfies the non-parallel property. In particular, the map $\Phi$ defined in \eqref{finalmap} for this choice of $H,$ induces an injective map $\tilde{\Phi} .$ Furthermore, $\tilde{\Phi}$ satisfies the Lipschitz bound
\[
\|\tilde{\Phi} ([x]) - \tilde{\Phi} ([y])\| \leq 3C d_G([x],[y]), \mbox{ with }\\
C = \|\ell\| \max \left\{ \norm{\nabla_{\mathbb{S}} (F_T)}_\infty, \max_{z \in \mathbb{S}} \|F_T(z)\|\right\},
\]
where $\|\ell\|$ is the operator norm of $\ell.$
\end{lemma}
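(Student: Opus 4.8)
The plan is to assemble this lemma entirely from the machinery already established: Lemma~\ref{F_G NPP} supplies the non-parallel property for $F_T$, Remark~\ref{NPPpreserved} transports it across the linear reduction $\ell$, Proposition~\ref{finalmapinjectivity} converts injectivity together with the non-parallel property into injectivity of $\tilde\Phi$, and Theorem~\ref{generalthm} delivers the Lipschitz bound, which I then rewrite in terms of $F_T$ and $\|\ell\|$.

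First I would verify the hypothesis of Remark~\ref{NPPpreserved}, namely that $\ker(\ell)\cap(\mathrm{Im}\,F_T-\mathrm{Im}\,F_T)=\{0\}$. The key observation is that this kernel-avoidance condition is exactly equivalent to the injectivity of $\tilde H=\ell\circ\tilde F_T$ coming out of Theorem~\ref{dimreduction}. Indeed, since $F_T$ is $G$-invariant and $\tilde F_T$ is injective, for any $x,y$ one has $F_T(x)=F_T(y)$ if and only if $[x]=[y]$; thus if $w=F_T(x)-F_T(y)\in\ker(\ell)$, then $\ell(F_T(x))=\ell(F_T(y))$ forces $[x]=[y]$ by injectivity of $\tilde H$, whence $F_T(x)=F_T(y)$ and $w=0$ (and conversely the kernel condition returns the injectivity). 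With this in hand, Lemma~\ref{F_G NPP} and Remark~\ref{NPPpreserved} immediately give that $H=\ell\circ F_T$ satisfies the non-parallel property.

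Next, since $F_T$ is $G$-invariant and $\ell$ is linear, $H$ is $G$-invariant, and $\tilde H=\ell\circ\tilde F_T$ is injective; applying Proposition~\ref{finalmapinjectivity}(b) with $F=H$ then shows that the induced map $\tilde\Phi=\tilde\Phi_H$ is injective. For the Lipschitz estimate I would invoke Theorem~\ref{generalthm}: the map $H$ is $C^1$ (a polynomial map composed with a linear one), $G$-invariant, and non-parallel, so $\Phi_H$ is Lipschitz with optimal constant at most $3C'$, where $C'=\max\{\norm{\nabla_{\mathbb{S}}H}_\infty,\ \max_{z\in\mathbb{S}}\|H(z)\|\}$.

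Finally I would bound $C'$ by $C$. Since $H=\ell\circ F_T$ with $\ell$ linear, $\|H(z)\|\le\|\ell\|\,\|F_T(z)\|$ on $\mathbb{S}$, and, because differentiation commutes with the linear map, $\nabla_{\mathbb{S}}H=\ell\circ\nabla_{\mathbb{S}}F_T$, so $\norm{\nabla_{\mathbb{S}}H}_\infty\le\|\ell\|\,\norm{\nabla_{\mathbb{S}}F_T}_\infty$. Taking the maximum of the two terms yields $C'\le\|\ell\|\max\{\norm{\nabla_{\mathbb{S}}F_T}_\infty,\ \max_{z\in\mathbb{S}}\|F_T(z)\|\}=C$, hence $\|\tilde\Phi([x])-\tilde\Phi([y])\|\le 3C\,d_G([x],[y])$. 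I expect the only genuinely substantive point to be the equivalence in the first step between the injectivity produced by Theorem~\ref{dimreduction} and the kernel-avoidance hypothesis of Remark~\ref{NPPpreserved}; everything after that is a direct invocation of the earlier results together with submultiplicativity of the operator norm.
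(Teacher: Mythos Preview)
Your proof is correct and follows essentially the same route as the paper: invoke Lemma~\ref{F_G NPP} and Remark~\ref{NPPpreserved} (via Theorem~\ref{dimreduction}) to get the non-parallel property for $H=\ell\circ F_T$, then apply Proposition~\ref{finalmapinjectivity} and Theorem~\ref{generalthm}, and finally pull the operator norm $\|\ell\|$ out of the constant. If anything, you are more careful than the paper in explicitly verifying that the injectivity conclusion of Theorem~\ref{dimreduction} is equivalent to the kernel-avoidance hypothesis $\ker(\ell)\cap(\mathrm{Im}\,F_T-\mathrm{Im}\,F_T)=\{0\}$ required by Remark~\ref{NPPpreserved}.
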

\begin{proof}
Combining remark \ref{NPPpreserved}, theorem \ref{dimreduction}, and proposition \ref{F_G NPP}, we obtain that $\ell \circ F_T$ satisfies the non-parallel property. Now, using proposition \ref{finalmapinjectivity}, we get
\begin{eqnarray*}
\|\Phi (x) - \Phi (y)\| \leq 3C d_G([x],[y]), \mbox{ with }
C &=& \max \left\{ \norm{\nabla_{\mathbb{S}} (\ell \circ F_T)}_\infty, \max_{z \in \mathbb{S}} \|\ell \circ F_T(z)\| \right\}\nonumber\\ &\leq &\|\ell\| \max \left\{ \norm{\nabla_{\mathbb{S}} (F_T)}_\infty, \max_{z \in \mathbb{S}} \|F_T(z)\| \right\}
\end{eqnarray*}
\end{proof}
\begin{lemma}\label{constants estimate} If $F_T$ is defined as in \eqref{finalmap}, then 
\begin{equation}
\max \left\{ \|\nabla F_T\|_\infty, \|F_T\|_\infty\right\} \leq m.
\end{equation}
\end{lemma}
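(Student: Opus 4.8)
I would split the statement into its two halves, controlling $\sup_{z\in\mathbb S}\|F_T(z)\|$ and the supremum over $\mathbb S$ of the operator norm of the tangential differential of $F_T$; the common engine for both is a bound on the exponents occurring in the monomials of \eqref{F_G}. First I would record that every exponent is at most $m$: clearly $m_i\mid m$ gives $m_i\le m$, and $b_{jk}<m_k\le m$ by construction, so the only point needing an argument is $a_{jk}\le m$. Here I would use that $a_{jk}$ is the least positive integer for which $t_j^{a_{jk}}$ lies in $\mu_{m_k}$ (the $m_k$-th roots of unity), which forces $t_j^{a_{jk}}\in\mu_{m_j}\cap\mu_{m_k}=\mu_{\gcd(m_j,m_k)}$ and hence $a_{jk}=m_j/\gcd(m_j,m_k)\le m_j\le m$. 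In particular the total degree of each component is at most $m$ (in agreement with Noether's bound).

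For the value bound, $\sum_i|z_i|^2=1$ forces $|z_i|\le1$, so each coordinate of $F_T(z)$ has modulus at most $1$; writing $s_i=|z_i|^2$ and using $m_i,a_{jk},b_{jk}\ge1$,
\[
\|F_T(z)\|^2=\sum_i s_i^{m_i}+\sum_{j<k}s_j^{a_{jk}}s_k^{b_{jk}}\le\sum_i s_i+\sum_{j<k}s_js_k=1+\tfrac12\Big(1-\sum_i s_i^2\Big)\le\tfrac32 .
\]
Thus $\sup_{z\in\mathbb S}\|F_T(z)\|\le\sqrt{3/2}\le m$ for every $m\ge2$ (the case $m=1$ being trivial).

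For the gradient, each first-order partial derivative of a component of $F_T$ is the corresponding exponent times a monomial, so on $\mathbb S$ its modulus is at most that exponent, hence at most $m$; this already yields the entrywise bound. To obtain the operator norm of the tangential differential I would argue as follows. Since the components are holomorphic, $DF_T(z)$ is complex linear and its tangential operator norm is bounded by the full one; for $v\in\mathbb C^n$ and $z$ with all $z_i\neq0$ (a dense set, which suffices by continuity of $z\mapsto\|DF_T(z)\|$) the substitution $w_i=v_i/z_i$ turns $\|DF_T(z)v\|^2\le m^2\|v\|^2$ into
\[
\sum_i m_i^2 s_i^{m_i}|w_i|^2+\sum_{j<k}s_j^{a_{jk}}s_k^{b_{jk}}\,|a_{jk}w_j+b_{jk}w_k|^2\le m^2\sum_i s_i|w_i|^2 .
\]
The weighted Cauchy--Schwarz inequality $|a_{jk}w_j+b_{jk}w_k|^2\le(a_{jk}+b_{jk})(a_{jk}|w_j|^2+b_{jk}|w_k|^2)$ diagonalizes the left-hand side, and the estimate then follows by diagonal dominance once one establishes, for each $i$ and all $s_l\ge0$ with $\sum_l s_l=1$, the scalar inequality
\[
m_i^2 s_i^{m_i-1}+\sum_{k>i}a_{ik}(a_{ik}+b_{ik})s_i^{a_{ik}-1}s_k^{b_{ik}}+\sum_{j<i}b_{ji}(a_{ji}+b_{ji})s_j^{a_{ji}}s_i^{b_{ji}-1}\le m^2 .
\]

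The \textbf{main obstacle} is precisely this last family of scalar inequalities. A crude term-by-term bound that replaces every power of $s_i$ by $1$ fails, because the diagonal contribution $m_i^2 s_i^{m_i-1}$ already exhausts the entire budget $m^2$ at $s_i=1$. The feature I would exploit is the tradeoff built into the construction: every off-diagonal term carries a positive power of some coordinate $s_l$ with $l\neq i$ (since $a_{ik},b_{ik}\ge1$), so these terms vanish exactly at the vertex $s_i=1$ where the diagonal term saturates $m^2$; away from the vertex the deficit $m^2\bigl(1-s_i^{m_i-1}\bigr)$, which is first order in $1-s_i=\sum_{l\neq i}s_l$, dominates the off-diagonal sum once one retains the decay factor $s_i^{m_i-1}$ and invokes $\sum_l s_l=1$ together with the degree bound $a_{jk}+b_{jk}\le m$. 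Carrying this balancing through, distinguishing whether the differentiated exponent equals $1$ or is larger, is the technical heart of the lemma.
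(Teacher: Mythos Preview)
The paper's own proof is much shorter because it reads the symbol $\|\cdot\|_\infty$ as the \emph{entrywise} sup norm, not as a Euclidean or operator norm. Under that convention the lemma is immediate from the exponent bound you already proved: each component of $F_T$ on $\mathbb{S}$ is a monomial in variables of modulus at most $1$, hence has modulus at most $1\le m$; and each entry of the Jacobian is one of the exponents $m_i,a_{jk},b_{jk}$ times such a monomial, hence is bounded by $\max\{m_i,a_{jk},b_{jk}\}\le m$. That is literally the whole argument in the paper. Your derivation of $a_{jk}=m_j/\gcd(m_j,m_k)\le m$ is a welcome addition, since the paper merely asserts the exponent bound.

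You instead aim for a stronger statement: $\sup_{z\in\mathbb{S}}\|F_T(z)\|_2\le m$ and $\sup_{z\in\mathbb{S}}\|DF_T(z)\|_{\mathrm{op}}\le m$. This is arguably the interpretation needed for the mean-value step in Theorem~\ref{generalthm}, so it is a reasonable target; but it is not what the paper proves. Your value bound $\|F_T(z)\|_2\le\sqrt{3/2}$ is correct and tighter than anything in the paper. For the differential, however, your argument is genuinely incomplete: after the substitution $w_i=v_i/z_i$ and the weighted Cauchy--Schwarz step you correctly reduce to the family of scalar inequalities
\[
m_i^2 s_i^{m_i-1}+\sum_{k>i}a_{ik}(a_{ik}+b_{ik})s_i^{a_{ik}-1}s_k^{b_{ik}}+\sum_{j<i}b_{ji}(a_{ji}+b_{ji})s_j^{a_{ji}}s_i^{b_{ji}-1}\le m^2,
\]
but you then only \emph{describe} a balancing strategy (vertex saturation versus first-order deficit, using $a_{jk}+b_{jk}\le m$) without carrying it out. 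As written this is a heuristic, not a proof; in particular you have not shown that the off-diagonal contribution, which involves up to $n-1$ terms with no a priori control on $n$, is absorbed by $m^2(1-s_i^{m_i-1})$ uniformly in the exponents.

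So the situation is: under the paper's entrywise reading your first paragraph already finishes the lemma and everything after is surplus; under your operator-norm reading there is a real gap at the scalar inequality, which you have outlined but not closed.
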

\begin{proof}
Note that 
\begin{eqnarray}
\partial_{x_k} F_T & = & \partial_{x_k} (\{{x_i}^{m_i}\}_{1 \leq i \leq m}, \{{x_j}^{a_{jl}} {x_l}^{b_{jl}}\}_{1 \leq j < k \leq m}) \nonumber \\
 & = & (\{m_i {x_i}^{m_i - 1} \delta_{i,k}\}_{1 \leq i \leq m}, \{a_{jl} {x_j}^{a_{jl} -1} {x_l}^{b_{jl}} \delta_{j,k} + b_{jl} {x_j}^{a_{jl}} {x_l}^{b_{jl} - 1} \delta_{l,k}\}_{1 \leq j < l \leq m})
\end{eqnarray}
But if $x \in \mathbb{S},$ then
\begin{eqnarray}
\sup_{1 \leq k \leq m} \|\partial_{x_k} F_T\|_{\infty} & \leq & \max \left\{ \max_{1 \leq i \leq m} \left\{ m_i {x_i}^{m_i - 1} \right\}, \max_{1 \leq j < l \leq m} \left\{a_{jl} {x_j}^{a_{jl} -1} {x_l}^{b_{jl}}\right\}, \max \left\{ b_{jl} {x_j}^{a_{jl}} {x_l}^{b_{jl} - 1}\right\} \right\} \nonumber\\
 & \leq & \max_{1 \leq i, j, l \leq m} \left\{ m_i, a_{jl}, b_{jl}\right\}.
\end{eqnarray}
Since 
\[
\|\nabla_{\mathbb{S}} F_T \| \leq \|\nabla F_T\|,
\]
we have that
\[
\|\nabla_{\mathbb{S}} F_T\|_\infty \leq \max_{1 \leq i, j, l \leq m} \left\{ m_i, a_{jl}, b_{jl} \right\} \leq m.
\]
\end{proof}
\textit{Proof of Theorem \ref{cyclic case}.} 

We start with the map $F_T : \mathbb{C}^n \to \mathbb{C}^{n (n + 1)/2}$ given by \eqref{F_G}. It is proven that this map is separating in \cite{emilie}, chapter 5. By Lemma \ref{F_G NPP}, it satisfies the non-parallel property. We can apply Theorem \ref{dimreduction} to reduce the dimension of the target space to $2n + 1.$ For a generic $\ell : \mathbb{C}^{n (n + 1)/2} \to \mathbb{C}^{2n + 1},$ we have that
\[
\ell \circ F_T : \mathbb{C}^n \to \mathbb{C}^{2n + 1}
\]
is a separating map. Now, by Remark \ref{NPPpreserved}, we have that $\ell \circ F_T$ also satisfies the non-parallel property. Hence we can use Theorem \ref{generalthm} for $\ell \circ F_T,$ and we obtain an injective map $\tilde{\Phi}:\mathbb{C}^n/\mathbb{Z}_m \to \mathbb{C}^{2n + 1}$ with a Lipschitz bound:
\[
\|\tilde{\Phi} (x) - \tilde{\Phi} (y)\| \leq 3C d_{\mathbb{Z}_m} (x,y)
\]
where 
\[
C = \|\ell\| \max \left\{ \norm{\nabla_{\mathbb{S}} (F_T)}_\infty, \max_{z \in \mathbb{S}} \|F_T (z)\| \right\}.
\]
Using Lemma \ref{constants estimate}, we directly deduce that
\[
\|\tilde{\Phi}(x) - \tilde{\Phi}(y)\| \leq 3 \|\ell\| m d_{\mathbb{Z}_m} (x,y).
\]
\qed \\
Before proceeding we make a few remarks.

Since the set of invariants we use in this case are monomials of the form $x_i^ax_j^b$ then it could make sense to arrange these in an $n\times n$ matrix. In fact, since we only have one monomial for each $(i,j)$ we can choose to make this matrix triangular, symmetric, or Hermitian if we so desire. As such this problem fits naturally into the broader context of matrix recovery problems. In this paper we do not need to exploit this point of view as our proofs do not benefit from it.

We now return to the original motivation of this work, which was to study translation invariant measurements on $\mathbb{C}^n$. In this case $\mathbb{Z}_n$ acts on $\mathbb{C}^n$ via translation as in \eqref{Znaction}. In this case the matrix $T$ is not diagonal, but it is diagonalizable by the Fourier transform. In the Fourier domain our action is given by powers of the modulation matrix
$$
M=\mathrm{diag}(e^{2\pi i j/n})_{j=0}^{n-1}.
$$
One of the main motivations that Mallat cites in \cite{mallat} for the development of the scattering transform is that although the modulus of the Fourier transform is translation invariant, it is not stable. In our context by taking the modulus of the first $n$ entries of $F_T(x)$, i.e., the measurements of the form $x_i^{m_i}$, we can recover the modulus of the Fourier transform, but even these are not enough to get injectivity which is why we need the rest of the measurements.

\subsection{The homogeneous case}\label{hg}
In this subsection we will make a few remarks about the case when  $F_T$ is homogeneous. We begin by showing that this only happens when $T=\omega I$ for some root of unity $\omega$. Note that the case $ \omega = -1$ which corresponds to real phase retrieval is a special case of this.

\begin{proposition}
$F_T$ is homogeneous if and only if $T=\omega I$ for some $m$th root of unity $\omega$.
\end{proposition}
\begin{proof}
If $T=\omega I$ then 
$$
\mathcal{P}_T=\{x_i^m,x_ix_j^{m-1}\}
$$
and so $F_T$ is homogeneous of degree $m$.

Conversely, suppose $F_T$ is homogeneous of degree $m$ and $T=\mathrm{diag}(t_1,...,t_n)$. We readily see that each $t_i$ must be a primitive $m$th root of unity, if not we would have at least on pair of monomials of the form $x_i^{m_i}$ and $x_j^{m_j}$ with $m_i\neq m_j$. Now observe that the monomial $x_i^ax_j^b$ must satisfy $a+b=m$. Then, since $t_i$ and $t_j$ are both primitive $m$th roots of unity there is a $k$ so that $t_i=t_j^k$. Now by the division algorithm and the minimality of $a$ we see that $a= m\text{ mod } k$ and $a+kb=m$. This implies $k=1$ and therefore $t_i=t_j=\omega$.
\end{proof}

Note that in the proof of Theorem \ref{dimreduction} we needed to introduce a new variable to homogenize the map $F_T$, however when $T=\omega I$ this is not necessary. This means we can slightly improve the conclusion of Theorem \ref{main} as follows:

\begin{theorem} \label{omega I case}
Let $\omega$ be an $m$-th root of unity and let $\mathbb{Z}_m$ act on $\mathbb{C}^n$ via $T=\omega I$. Then there exists a $\Phi: \mathbb{C}^n \to \mathbb{C}^{2n}$ satisfying the same conclusions as Theorem \ref{cyclic case}.
\end{theorem}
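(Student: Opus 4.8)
The plan is to mirror the proof of Theorem \ref{cyclic case}, but to exploit the fact that when $T = \omega I$ the map $F_T$ is already homogeneous, so that the artificial homogenizing variable $t$ used in Theorem \ref{dimreduction} is unnecessary and the dimension bound improves by one. First I would record, using the Proposition just established, that for $T = \omega I$ the separating set is $\mathcal{P}_T = \{x_i^m, x_i x_j^{m-1}\}$, so that $F_T : \mathbb{C}^n \to \mathbb{C}^{n(n+1)/2}$ is homogeneous of degree $m$ and the induced $\tilde{F}_T$ is injective by \cite{emilie}.

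The key observation is that the homogeneity of $F_T$ lets me sharpen the dimension count in Theorem \ref{dimreduction}. In that proof one forms the difference map $F(x,y,t)$ and argues that $\mathrm{Im}(F)$ is a projective variety in $\mathbb{C}^N$ of dimension at most $2n+1$, whence a generic linear $\ell$ onto $\mathbb{C}^k$ is injective on the image as soon as $k > 2n$, giving $k = 2n+1$. When $F_T$ is \emph{already} homogeneous of a single degree $m$, the extra variable $t$ is not needed: the map $G(x,y) := F_T(x) - F_T(y)$ is itself homogeneous under the simultaneous scaling $(x,y) \mapsto (\mu x, \mu y)$, so $\mathrm{Im}(G) \subseteq \mathbb{C}^N$ is a cone and descends to a projective variety whose dimension is at most $2n$ rather than $2n+1$. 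Repeating the incidence-variety argument of Theorem \ref{dimreduction} verbatim with this dimension count, one finds that a generic linear map $\ell : \mathbb{C}^{n(n+1)/2} \to \mathbb{C}^k$ has $\ker(\ell) \cap \mathrm{Im}(G) = \{0\}$ as soon as $k > 2n - 1$, that is $k = 2n$ suffices to keep $\ell \circ \tilde{F}_T$ injective.

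The remainder then runs exactly as in the proof of Theorem \ref{cyclic case}. By Lemma \ref{F_G NPP}, $F_T$ has the non-parallel property; by Remark \ref{NPPpreserved} this survives composition with the generic $\ell : \mathbb{C}^{n(n+1)/2} \to \mathbb{C}^{2n}$ supplied by the sharpened dimension reduction, since that $\ell$ satisfies $\ker(\ell) \cap (\mathrm{Im}\, F_T - \mathrm{Im}\, F_T) = \{0\}$. Setting $H = \ell \circ F_T$ and applying Proposition \ref{finalmapinjectivity} and Theorem \ref{generalthm}, the map $\Phi_H : \mathbb{C}^n \to \mathbb{C}^{2n}$ defined in \eqref{finalmap} induces an injective $\tilde{\Phi}_H$ on $\mathbb{C}^n / \mathbb{Z}_m$ that is Lipschitz, with the same constant $3\|\ell\| m$ obtained from Lemma \ref{constants estimate}, since the degree and coefficient bounds for $\{x_i^m, x_i x_j^{m-1}\}$ are again bounded by $m$.

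The step I expect to require the most care is the improved dimension count in the second paragraph: one must check that the homogeneity of $F_T$ genuinely lets the image cone $\mathrm{Im}(G)$ projectivize without first appending the coordinate $t$, so that the relevant projective variety has dimension $\le 2n$ and the inequality in the incidence argument becomes $k > 2n-1$ instead of $k > 2n$. This is the only place where the special structure $T = \omega I$ is used in an essential way; everything else is a transcription of the arguments already given for the general cyclic case.
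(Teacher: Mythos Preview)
Your proposal is correct and follows exactly the approach the paper indicates: the paper does not give a formal proof of this theorem but simply remarks, just before stating it, that since $F_T$ is already homogeneous when $T=\omega I$, the auxiliary variable $t$ in the proof of Theorem~\ref{dimreduction} is unnecessary, so the source of the difference map has dimension $2n$ instead of $2n+1$ and the incidence-variety count improves to $k>2n-1$. You have spelled out precisely this, and the remaining steps (non-parallel property, Remark~\ref{NPPpreserved}, Theorem~\ref{generalthm}, Lemma~\ref{constants estimate}) transfer verbatim as you say.
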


As we noted above we can arrange our monomials into an $n\times n$ matrix. In this case one such matrix would be
$$
x^{m-1}x^T
$$
where $x^{m-1}$ denotes the vector whose components are the components of $x$ raised to the power $m-1$. Note that this is a rank one matrix. This means that just as in the case of phase retrieval, any linear map $\ell$ that has the property that every nonzero matrix in $\ker(\ell)$ has rank at  least 3 will satisfy the conclusion of Theorem \ref{omega I case}. Therefore we can use such an $\ell$ to obtain the $\tilde{\Phi}$ in theorem \ref{cyclic case}. However, the set of rank (at most) two $n\times n$ matrices is a determinantal variety and is well known to have dimension $4n-4$, so by the Projective Dimension Theorem any linear map whose kernel avoids this variety must have rank at least $4n-4$ whereas Theorem \ref{dimreduction} says a generic linear map of rank $2n$ will yield the desired conclusion. This is because we only need $\ker(\ell)$ to avoid rank two matrices of the form $x^{m-1}x^T-y^{m-1}y^T$ which is a much smaller subvariety.

\subsection{No lower Lipschitz bounds}\label{absenceofllb} A natural question is whether or not the map $\Phi$ satisfies a lower Lipschitz bound. We show that we can not expect this to be the case in general. The next proposition shows, $\Phi^{-1}$ is never Lipschitz when $G$ is cyclic and $n, |G|\geq 3.$

\begin{proposition} Let $m,n\geq 3.$ Let $T$ be a representation of $\mathbb{Z}_m$ acting on $\mathbb{C}^n.$ Then

\[\inf_{x,y\in \mathbb{C}^n} \frac{\|\Phi_{F_T} (x)-\Phi_{F_T} (y)\|}{d_{\mathbb{Z}_m} ([x],[y])}=0.\]

\end{proposition}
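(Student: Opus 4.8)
The plan is to exploit the scale invariance of the displayed quotient together with the existence of a \emph{degenerate coordinate direction} of $F_T$ at a suitable basis point. First I would note that both objects are positively homogeneous of degree one: directly from \eqref{finalmap}, $\Phi_{F_T}(cx)=c\,\Phi_{F_T}(x)$ for $c>0$, while $d_{\mathbb{Z}_m}([cx],[cy])=c\,d_{\mathbb{Z}_m}([x],[y])$ from \eqref{qmetric}. Hence the ratio is invariant under simultaneous positive scaling of $x$ and $y$, so it suffices to produce unit vectors along which it tends to $0$. On the unit sphere $\mathbb{S}$ one has $\Phi_{F_T}=F_T$, and therefore the task reduces to finding $x_t,y_t\in\mathbb{S}$ in distinct orbits with $\|F_T(x_t)-F_T(y_t)\|/d_{\mathbb{Z}_m}([x_t],[y_t])\to 0$.

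The heart of the argument, and the step I expect to be the main obstacle, is the existence of distinct indices $r\neq k$ with $m_k\ge 2$ and $\partial_{x_k}F_T(e_r)=0$, where $e_r,e_k$ denote standard basis vectors. Since $F_T$ is holomorphic with unit coefficients, $\partial_{x_k}F_T(e_r)=0$ holds exactly when $F_T$ contains no monomial of the form $x_k x_r^{c}$; that is, when $x_k$ enters $F_T$ only through $x_k^{m_k}$ with $m_k\ge 2$ and through mixed monomials carrying a further variable that vanishes at $e_r$. I would prove existence by contradiction, assuming $\partial_{x_k}F_T(e_r)\neq 0$ for every admissible ordered pair. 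If some coordinate $i$ is fixed by the action ($t_i=1$), then for any nontrivial $r$ (one exists as $m=\mathrm{lcm}_i\,m_i\ge 3$) the monomial $x_r x_i^{c}$ is never invariant, forcing $\partial_{x_r}F_T(e_i)=0$, a contradiction; hence every coordinate satisfies $m_i\ge 2$. But then, for any two indices, the non-vanishing of both $\partial_{x_k}F_T(e_r)$ and $\partial_{x_r}F_T(e_k)$ forces the single $\{r,k\}$ mixed monomial to be $x_rx_k$, i.e.\ $t_rt_k=1$; applying this across three indices (available since $n\ge 3$) yields $t_r^2=1$ and thus $m\le 2$, contradicting $m\ge 3$. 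This combinatorial/number-theoretic case analysis, driven precisely by the hypotheses $m,n\ge 3$, is the crux.

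With a degenerate direction $(r,k)$ secured, I would set $\gamma(t)=\cos t\,e_r+\sin t\,e_k\in\mathbb{S}$ and compare $\gamma(t)$ to the fixed point $e_r$. For $0<t<\pi/2$ the vector $\gamma(t)$ has nonzero $k$-th coordinate, whereas every element $T^j e_r=t_r^j e_r$ of the orbit of $e_r$ has vanishing $k$-th coordinate; projecting the difference onto the $k$-th coordinate shows $\gamma(t)\not\sim e_r$ and $d_{\mathbb{Z}_m}([\gamma(t)],[e_r])\ge|\sin t|$. On the other hand, expanding the polynomial $F_T$ along $\gamma$, every component of $F_T(\gamma(t))-F_T(e_r)$ is $O(t^2)$: the component $x_r^{m_r}=\cos^{m_r}t$ differs from $1$ by $O(t^2)$, the component $x_k^{m_k}=\sin^{m_k}t$ is $O(t^{m_k})=O(t^2)$, the $\{r,k\}$ mixed monomial (if present) carries $x_k$ to a power at least $2$ and so is $O(t^2)$, and all remaining components vanish identically along $\gamma$ because the other coordinates are zero. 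Consequently
\[
\frac{\|\Phi_{F_T}(\gamma(t))-\Phi_{F_T}(e_r)\|}{d_{\mathbb{Z}_m}([\gamma(t)],[e_r])}\le \frac{C\,t^{2}}{|\sin t|}\longrightarrow 0\qquad (t\to 0^{+}),
\]
and the scale invariance from the first step promotes this to the claimed vanishing of the infimum over all of $\mathbb{C}^n$.
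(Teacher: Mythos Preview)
Your argument is essentially the paper's: both restrict to the unit sphere, perturb a standard basis vector $e_r$ along a coordinate direction $e_k$ for which the $\{r,k\}$ mixed monomial of $F_T$ carries $x_k$ to a power at least $2$, and compare the resulting $O(t^2)$ numerator against the order-$t$ quotient distance. Your case analysis (disposing first of a fixed coordinate, then using three indices to force $t_i^2=1$) spells out the existence step that the paper compresses into ``without loss of generality $m_i\ge 2$'' together with ``since $m,n\ge 3$ there is a pair with $\max\{a_{ij},b_{ij}\}\ge 2$''; both arguments tacitly use faithfulness of the representation (your identity $m=\mathrm{lcm}_i\,m_i$).
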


\begin{proof}

Recall the definition of the map $F_T$ \eqref{F_G}. Without loss of generality we may assume that $m_i\geq 2$ for all $i=1,\ldots, n.$ Since $m,n \geq 3,$ we see that there is a pair $(i,j)$ with $\max \{a_{ij}, b_{ij}\}\geq 2.$ Again, without loss of generality, we assume $(1,2)$ is such a pair and that $a_{12}\geq 2.$

Let $x=(0,1,0,0,\ldots, 0)$ and for $\varepsilon>0$ small consider the points $ x_\varepsilon=( \varepsilon, \sqrt{1-\varepsilon^2},\ldots, 0).$ We know that for sufficiently small $\varepsilon,$ $d_{\mathbb{Z}_{m}}([x_\varepsilon],[x])=\|x_\varepsilon-x\|,$ and therefore

\begin{equation}\label{nolower1}
d_{\mathbb{Z}_{m}}([x_\varepsilon],[x])=\varepsilon (1+o(1)).
\end{equation}

On the other hand, note that $\Phi$ is equal to $\ell\circ F_T$ on the unit sphere by definition. Also, any nonzero component of $\ell\circ F_T (x_\varepsilon)$ is a linear combination of $x_1^{m_1}, x_1^{a_{12}}x_2^{b_{12}}, x_2^{m_2}.$ If $k$ is any such component, then

\[\left( \ell\circ F_T (x_\varepsilon)-\ell\circ F_T (x) \right)_k=c_k^1 \varepsilon^{m_1}+c_k^2 \varepsilon^{a_{12}}\left( 1-\varepsilon^2\right)^{b_{12}/2}+c_k^3 \left( 1-\varepsilon^2 \right)^{m_2/2}-c_k^3 ,\]

which is $\mathcal{O}(\varepsilon^2)$ because $m_1, a_{12}\geq 2.$ This implies that

\begin{equation}\label{nolower2}
\|\Phi(x_\varepsilon)-\Phi(x)\|=\mathcal{O}(\varepsilon^2)
\end{equation}

The conclusion follows from \eqref{nolower1} and \eqref{nolower2}.

\end{proof}

To illustrate the previous proposition, consider the example 5.2.1. in \cite{emilie}: there $\mathbb{Z}_{12}$ acts on $\mathbb{C}^5$ and one has the explicit $\ell\circ F_T: \mathbb{C}^5\mapsto \mathbb{C}^8$ given by:

\[(x_1,\ldots, x_5) \mapsto (x_5^6, x_4 x_5^5, x_4^6+x_3 x_5^4, x_3 x_4^4+x_2^2 x_5^3, x_3^3+x_2^2 x_4^3+x_1 x_5^3, x_1 x_4^3, x_1 x_2^2, x_1^2).\]

In this example we can take $x=(0, 0, 0, 1,0)$ and $ x_\varepsilon=(0,0,0,\sqrt{1-\varepsilon^2}, \varepsilon).$

Although $\tilde{\Phi}^{-1}$ is not Lipschitz, it is always continuous.

\begin{proposition} Under the same hypotheses of Proposition \ref{finalmapinjectivity}, the map $\tilde{\Phi}_F^{-1}$ is continuous.

\end{proposition}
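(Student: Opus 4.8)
The plan is to show that the induced map $\tilde{\Phi}_F : \mathbb{C}^n/G \to \mathbb{C}^N$ is a continuous bijection onto its image and then argue that its inverse is continuous. Since $\tilde{\Phi}_F$ is injective by Proposition \ref{finalmapinjectivity}(b), continuity of the inverse will follow once I verify that $\tilde{\Phi}_F$ is a proper map, or equivalently that it maps closed sets to closed sets (relative to the image). The cleanest route exploits the fact that the group $G = \mathbb{Z}_m$ is finite, so the quotient $\mathbb{C}^n/G$ carries a particularly rigid topology: each orbit is finite, and the quotient map $\mathbb{C}^n \to \mathbb{C}^n/G$ is proper, closed, and open.

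First I would record the key compactness input: since $G$ acts unitarily, it preserves the norm, so for each $R>0$ the closed ball $\overline{B_R} \subset \mathbb{C}^n$ is $G$-invariant, and its image $K_R := q(\overline{B_R})$ in $\mathbb{C}^n/G$ is compact (being the continuous image of a compact set under the quotient map $q$). Moreover, because the action is by isometries, the quotient metric $d_G$ in \eqref{qmetric} makes $(\mathbb{C}^n/G, d_G)$ a genuine metric space, and one checks that $d_G([x],[0]) = \|x\|$, so $K_R$ is exactly the closed ball of radius $R$ about $[0]$ in the quotient metric. The strategy is to prove sequential continuity of $\tilde{\Phi}_F^{-1}$ on its image: given $\tilde{\Phi}_F([x_k]) \to \tilde{\Phi}_F([x])$, I must show $[x_k] \to [x]$ in $\mathbb{C}^n/G$.

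The main step is to extract convergence. From the definition \eqref{finalmap}, the homogeneity of degree one in the radial variable gives $\|\Phi_F(x)\| = \|x\| \, \|F(x/\|x\|)\|$ whenever $x \neq 0$; combined with the injectivity of $\tilde{F}$ on the compact sphere $\mathbb{S}$ and the non-parallel property, one sees that $\|\Phi_F(x)\|$ controls $\|x\|$ from below away from zero (the quantity $\min_{z\in\mathbb{S}}\|F(z)\|$ need not be positive in general, but injectivity of $\tilde F$ together with the non-parallel property forces $F$ to be nonvanishing on $\mathbb{S}$ only up to the $G$-action; here I would instead use that $\Phi_F(x) \to \Phi_F(x_0)$ forces the sequence $\|x_k\|$ to stay bounded, so the $[x_k]$ live in a fixed compact ball $K_R$). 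Since $K_R$ is compact and $\tilde{\Phi}_F$ is a continuous injection, its restriction to $K_R$ is a homeomorphism onto its image; hence any limit point of $[x_k]$ must be $[x]$, and by compactness the whole sequence converges to $[x]$. This is the standard fact that a continuous injection from a compact space to a Hausdorff space is a homeomorphism onto its image.

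The hard part will be handling the possibility that $\tilde{\Phi}_F([x_k])$ converges but the radii $\|x_k\|$ are \emph{a priori} unbounded, which would place the sequence outside any single compact $K_R$. The resolution is that $\Phi_F$ is norm-coercive: because $F$ is continuous on the compact sphere and $\tilde F$ is injective there, the number $\mu := \min_{z\in\mathbb{S}}\|F(z)\|$ is strictly positive (if $F(z_0)=0$ then $F(z_0)=\lambda F(z_0)$ for all $\lambda>0$, and the non-parallel property would force $z_0 = g z_0'$ for every nearby $z_0'$, contradicting injectivity of $\tilde F$); consequently $\|\Phi_F(x)\| \geq \mu\|x\|$, so a convergent sequence of images has bounded preimage norms, confining everything to a single $K_R$ and completing the argument.
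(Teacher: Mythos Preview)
Your overall strategy is exactly the paper's: show that $\Phi_F(x_k)\to\Phi_F(x)$ forces $\|x_k\|$ to be bounded, extract a convergent subsequence in $\mathbb{C}^n$, and use continuity of $\Phi_F$ together with injectivity of $\tilde{\Phi}_F$ to identify the limit. The paper does precisely this in three lines.

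There is, however, a genuine gap in your justification of the key coercivity step $\mu:=\min_{z\in\mathbb{S}}\|F(z)\|>0$. You argue that if $F(z_0)=0$ then ``the non-parallel property would force $z_0=gz_0'$ for every nearby $z_0'$''. It does not. The non-parallel property only compares points $x,y\in\mathbb{S}$ for which $F(x)=\lambda F(y)$ with $\lambda>0$; applying it with $x=y=z_0$ yields only the triviality $z_0=gz_0$ for some $g$, and a nearby $z_0'$ with $F(z_0')\neq 0$ is in no such relation to $z_0$. So this step, as written, does not go through.

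The clean fix is to use the injectivity of $\tilde{\Phi}_F$ directly (which you must be assuming anyway for $\tilde{\Phi}_F^{-1}$ to make sense): if $F(z_0)=0$ for some $z_0\in\mathbb{S}$, then $\Phi_F(z_0)=\|z_0\|\,F(z_0)=0=\Phi_F(0)$, yet $z_0\not\sim 0$ since the action is unitary, contradicting injectivity of $\tilde{\Phi}_F$. Thus $F$ is nonvanishing on the compact sphere, so $\mu>0$ and $\|\Phi_F(x)\|\ge\mu\|x\|$, giving the boundedness you need. This is what the paper means by ``$F$ is bounded away from zero on the sphere''.

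One further remark: you invoke that $G=\mathbb{Z}_m$ is finite to get compactness properties of the quotient. This is unnecessary; the only compactness used is that of closed balls in $\mathbb{C}^n$, and the argument works for any unitary action as in \eqref{assumption unitary}.
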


\begin{proof} Let $x\in \mathbb{C}^n$ and $(x_k)_{k\in \mathbb{N}}\subseteq \mathbb{C}^n$ be such that $\Phi_F (x_k)\to \Phi_F (x), \mbox{ as }k\to \infty.$ Because $F$ is bounded away from zero on the sphere, we see that $\|x_k\|$ is bounded. Therefore, up to subsequence (not relabelled) $x_k$ converges to some $y\in \mathbb{C}^n .$ By continuity of $\Phi_F ,$ we see that $\Phi_F (y)=\Phi_F (x)$ which then implies $y \sim x .$ We have shown that every subsequence of $([x_k])_{k\in \mathbb{N}}$ possesses a subsequence converging to $[x],$ thus the whole sequence converges to $[x].$

\end{proof}

\section{Conclusions, open questions and final remarks}\label{remarks}

As it can be seen from our analysis, effective $G$-invariant representations can be built in finite dimensions by exploiting underlying algebraic and geometric properties of polynomial invariants. Though our transforms are general enough to cover many problems of interest, there are still some natural open questions regarding the construction of complete measurements in the finite dimensional setting. 

A first question has to do with the use of polynomial invariants. In the end our transform does not preserve any of the features or the the algebraic structure of the maps leading up to it, so it would be very enlightening to explore ways to bypass the use of the invariant polynomials, and to appeal to the geometric non-parallel property to produce the final transform. A more analytical and more flexible approach would probably yield embeddings into even lower dimensions with much more explicit controls.

About the dimension reduction as we perform it here, it is certainly worth studying how to make a constructive choice of a linear map $\ell$ than that provided by Theorem \ref{dimreduction}. Even though our maps do not have a lower Lipschitz bound the choice of $\ell$ should play a significant role in any computation of the inverse map. More specifically, since we know that $\mathrm{ker}(\ell)$ must avoid nonzero vectors that are differences of elements of $\mathrm{Im}(F_T)$ in order for $\tilde{\Phi}$ to be injective, then it seems intuitively clear that having these vectors ``bounded away'' from $\mathrm{ker}(\ell)$ in some sense should provide numerical stability. There are a variety of ways we could define "bounded away" in this context. We expect properties similar to the nullspace property \cite{D06} and the restricted isometry property \cite{CRT06} to be of use here.

Another question that has to do with the algebraic approach employed here is how to treat more general group actions. Our assumption that $G$ is a finite group is essential for the separating polynomials to be actually separating in the sense that they discriminate orbits (see section \ref{alg} for more details).  Also, another assumption that one should try to do without is that the action is unitary. This assumption is certainly convenient due to the handy representation and particular set of separating monomials, but it also plays a role in the rest of the construction and it affects the Lipschitz bound in a significant way because without it $\Phi$ would not be invariant. Thus, it should be clear that a different perspective is needed to treat more general situations.

One salient desired property that is absent in our construction is a quantitative control of the injectivity of $\Phi,$ for example our map does not come with a lower Lipschitz bound. Of course, this could be a matter of our use of polynomial invariants, but it could be that a much more delicate problem is at hand. We believe that even if complete sets of measurements could be constructed that make use of other types of invariants that would have a lower Lipschitz constant, the bounds obtained could be very bad and would probably go to $0$ as $n\to\infty.$ This is known to be the case in phase retrieval \cite{cahill2016phase}.

Finally, one of the motivations behind this work was to provide an alternative to the scattering transform of Mallat \cite{mallat} and the strategy preferred in the works \cite{bruna2013invariant, bronstein2017geometric, rohe2017svf, liu2014deeply, eppenhof2018deformable, cirecsan2010deep} based on neural networks; our main goal was to provide an approach better adapted to deal with finite dimensional problems. In \cite{mallat} the transform obtained is non expansive, that is the Lipschitz constant is equal to $1.$ This is equivalent to, in our setting, having a bound independent of the dimension. We note that in principle, we could obtain non expansive maps simply by normalizing $\Phi$ (which depends on $n$) by the corresponding Lipschitz constant (which also depends on $n$). The challenge becomes then to understand the possible limits of these normalized transforms as $n\to\infty$ and how they relate to the scattering transform. The authors anticipate studying some of these problems in the future.

\bigskip

\noindent \textbf{Acknowledgements.} The work of A. Contreras was partially supported by a grant from the Simons Foundation \# 426318.

\bibliographystyle{plain}
\bibliography{sep}
\end{document}